\newcommand{\bB}{{\mathbb{B}}}
\newcommand{\bC}{{\mathbb{C}}}
\newcommand{\bM}{{\mathbb{M}}}
\newcommand{\bN}{{\mathbb{N}}}
\newcommand{\bR}{{\mathbb{R}}}
\newcommand{\bS}{{\mathbb{S}}}
  \newcommand{\D}{{\mathcal{D}}}
  \newcommand{\E}{{\mathcal{E}}}
  \newcommand{\F}{{\mathcal{F}}}
\renewcommand{\H}{{\mathcal{H}}}
  \newcommand{\K}{{\mathcal{K}}}  
\renewcommand{\L}{{\mathcal{L}}}
\renewcommand{\S}{{\mathcal{S}}}
  \newcommand{\U}{{\mathcal{U}}}
  \newcommand{\V}{{\mathcal{V}}}
\newcommand{\fA}{{\mathfrak{A}}}
\newcommand{\fB}{{\mathfrak{B}}}
\newcommand{\fH}{{\mathfrak{H}}}
\newcommand{\fJ}{{\mathfrak{J}}}
\newcommand{\fS}{{\mathfrak{S}}}
\newcommand{\fT}{{\mathfrak{T}}}
\newcommand{\rC}{\mathrm{C}}
\newcommand{\eps}{\varepsilon}
\renewcommand{\phi}{\varphi}
\newcommand{\upchi}{{\raise.35ex\hbox{$\chi$}}}
\newcommand{\ol}{\overline}
\newcommand{\id}{\operatorname{id}}
\newcommand{\tr}{\operatorname{tr}}
\newtheorem{lemma}{Lemma}[section]
\newtheorem{theorem}[lemma]{Theorem}
\newtheorem{proposition}[lemma]{Proposition}
\newtheorem{corollary}[lemma]{Corollary}
\theoremstyle{definition}
\newtheorem{example}[lemma]{Example}
\begin{document}
\author{Rapha\"el Clou\^atre}
\address{Department of Mathematics, University of Manitoba, Winnipeg, Manitoba, Canada R3T 2N2}
\email{raphael.clouatre@umanitoba.ca\vspace{-2ex}}
\thanks{The author was partially supported by an NSERC Discovery Grant}
\begin{abstract}
We study restriction and extension properties for states on $\rC^*$-algebras with an eye towards hyperrigidity of operator systems. We use these ideas to provide supporting evidence for Arveson's hyperrigidity conjecture. Prompted by various characterizations of hyperrigidity in terms of states, we examine unperforated pairs of self-adjoint subspaces in a $\rC^*$-algebra. The configuration of the subspaces forming an unperforated pair is in some sense compatible with the order structure of the ambient $\rC^*$-algebra. We prove that commuting pairs are unperforated, and obtain consequences for hyperrigidity.  Finally, by exploiting recent advances in the tensor theory of operator systems, we show how the weak expectation property can serve as a flexible relaxation of the notion of unperforated pairs.
\end{abstract}
\subjclass[2010]{46L07, 46L30 (46L52)}
\keywords{Operator systems, completely positive maps, unique extension property, hyperrigidity}
\title[Unperforated pairs and hyperrigidity]{Unperforated pairs of operator spaces and hyperrigidity of operator systems}

\maketitle

\section{Introduction}\label{S:intro}

The study of uniform algebras (i.e. closed unital subalgebras of commutative $\rC^*$-algebras), combines concrete function theoretic ideas with abstract algebraic tools \cite{gamelin1969}. It is a classical topic that has proven to be useful in operator theory. Indeed, a prototypical instance of a uniform algebra is the disc algebra of continuous functions on the closed unit disc which are holomorphic on the interior. Through a basic norm inequality of von Neumann, one can bring the analytic properties of the disc algebra to bear on the theory of contractions on Hilbert space. The seminal work of Sz.-Nagy and Foias on operator theory aptly illustrates the depth of this interplay \cite{nagy2010}, and to this day the link is still being exploited. 

In light of this highly successful symbiosis between operator theory and function theory, it is natural to look for further analogies. One may wish to transplant the sophisticated machinery available for uniform algebras in the setting of general operator algebras. This ambitious vision was pioneered by Arveson, who instigated an influential line of inquiry in his landmark paper \cite{arveson1969}. Therein, he introduced the notion of boundary representations for an operator system $\fS$, and proposed that these should be the non-commutative analogue of the so-called Choquet boundary of a uniform algebra. Furthermore, he noticed that these boundary representations could be used to construct a non-commutative analogue of the Shilov boundary as well. Although Arveson himself was not able to fully realize this program at the time, via the hard work of many hands \cite{hamana1979},\cite{dritschel2005},\cite{arveson2008},\cite{davidson2015} the $\rC^*$-envelope of an operator system was constructed by analogy with the classical situation. Nowadays, this circle of ideas is regarded as the appropriate non-commutative version of the Shilov boundary of a uniform algebra and it has emerged as a ubiquitous invariant in non-commutative functional analysis \cite{hamana1999},\cite{blecher2001},\cite{FHL2016},\cite{CR2017}.

Arveson also recognized that the  non-commutative Choquet boundary was a rich source of intriguing questions. For instance, in \cite{arveson2011} he proposed a tantalizing connection with approximation theory by recasting a classical phenomenon in operator algebraic terms. The classical setting is a result of Korovkin \cite{korovkin1953}, which goes as follows. For each $n\in \bN$, let $\Phi_n:C[0,1]\to C[0,1]$ be a positive linear map and assume that
\[
\lim_{n\to \infty}\|\Phi_n(f)-f\|=0
\]
for every $f\in \{1,x,x^2\}$. Then, it must be the case that
\[
\lim_{n\to \infty}\|\Phi_n(f)-f\|=0
\]
for every $f\in C[0,1]$. In other words, the asymptotic behaviour of the sequence $(\Phi_n)_n$ on the $\rC^*$-algebra $C[0,1]$ is uniquely determined by the operator system $\fS$ spanned by $1,x,x^2$. This striking phenomenon was elucidated  by several researchers (see for instance \cite{altomare2010} for a recent survey), but the perspective most relevant for our purpose here was offered by \v{S}a\v{s}kin \cite{saskin1967}, who observed that the key property of $\fS$ is that its Choquet boundary coincides with $[0,1]$. 

A natural non-commutative analogue of Korovkin-type rigidity would be an operator system $\fS\subset B(\H)$ with the property that for any sequence of unital completely positive linear maps
\[
\Phi_n:\rC^*(\fS)\to \rC^*(\fS), \quad n\in \bN
\]
such that
\[
\lim_{n\to \infty}\|\Phi_n(s)-s\|=0, \quad s\in \fS
\]
we must have
\[
\lim_{n\to \infty}\|\Phi_n(a)-a\|=0, \quad a\in \rC^*(\fS).
\]
In fact, Arveson introduced even more non-commutativity in this picture, and defined the operator system $\fS$ to be \emph{hyperrigid} if for any injective $*$-representation 
\[
\pi:\rC^*(\fS)\to B(\H_\pi)
\]
and for any sequence of unital completely positive linear maps
\[
\Phi_n:B(\H_\pi)\to B(\H_\pi), \quad n\in \bN
\]
such that
\[
\lim_{n\to \infty}\|\Phi_n(\pi(s))-\pi(s)\|=0, \quad s\in \fS
\]
we must have
\[
\lim_{n\to \infty}\|\Phi_n(\pi(a))-\pi(a)\|=0, \quad a\in \rC^*(\fS).
\]
Note that even in the case where $\rC^*(\fS)$ is commutative, a priori this phenomenon is stronger than the one observed by Korovkin, as we allow the maps $\Phi_n$ to take values outside of $\rC^*(\fS)$. Nevertheless, in accordance with \v{S}a\v{s}kin's insightful observation, Arveson conjectured \cite{arveson2011} that hyperrigidity is equivalent to the non-commutative Choquet boundary of $\fS$ being as large as possible, in the sense that every irreducible $*$-representation of $\rC^*(\fS)$ should be a boundary representation for $\fS$. This is now known as \emph{Arveson's hyperrigidity conjecture} and it has garnered significant interest in recent years \cite{kennedy2015},\cite{kleski2014hyper},\cite{NPSV2016},\cite{CH2016}. Arveson himself showed in \cite{arveson2011} that the conjecture is valid whenever $\rC^*(\fS)$ has countable spectrum.  Recently, it was verified in \cite{DK2016}  in the case where $\rC^*(\fS)$ is commutative. 

The hyperrigidity conjecture is the main motivation behind our  work here. Technically speaking however, the paper is centred around extensions and restrictions of states on $\rC^*$-algebras, and these issues occupy us for the majority of the article. We feel this approach to hyperrigidity is very natural, but as far as we know it has not been carefully investigated beyond the early connection realized in \cite[Theorem 8.2]{arveson2008}. In the final section of the paper, we introduce what we call ``unperforated pairs" of subspaces in a $\rC^*$-algebra. As we show, they constitute a device that can be leveraged to gain information about states, and ultimately to detect hyperrigidity. They also highlight a novel angle of approach to the hyperrigidity conjecture.

We now describe the organization of the paper more precisely.
In Section \ref{S:prelim} we gather the necessary background material. In particular, we recall that hyperrigidity of an operator system $\fS$ is equivalent to the following unique extension property: for every unital $*$-representation $\pi:\fA\to B(\H)$ and every unital completely positive linear map $\Pi:\fA\to B(\H)$ which agrees with $\pi$ on $\fS$, we have $\pi=\Pi$.  In Section \ref{S:charhyper}, we explore the link between hyperrigidity and two properties of states, namely the unique extension property and the pure restriction property. The first main result of that section establishes these properties as a tool to detect hyperrigidity. We summarize our findings   (Theorem \ref{T:purerestriction}, Corollary \ref{C:equivinclusion} and Theorem \ref{T:purestateuep}) in the following. 

\begin{theorem}\label{T:mainpurerestriction}
Let $\fS$ be an operator system and let $\fA=\rC^*(\fS)$. Assume that every irreducible $*$-representation of $\fA$ is a boundary representation for $\fS$. Let $\pi:\fA\to B(\H)$ be a unital $*$-representation and let $\Pi:\fA\to B(\H)$ be a unital completely positive extension of $\pi|_\fS$. The following statements are equivalent.
\begin{enumerate}

\item[\rm{(i)}] We have $\pi=\Pi$.

\item[\rm{(ii)}] We have $\Pi(\fA)\subset \pi(\fA)$.

\item[\rm{(iii)}] Every pure state on $\rC^*(\Pi(\fA))$ restricts to a pure state on $\pi(\fA)$.

\item[\rm{(iv)}] There is a family of states on $\rC^*(\Pi(\fA))$ which separate \\$(\Pi-\pi)(\fA)$ and  restrict to pure states on $\pi(\fA)$.

\item[\rm{(v)}] Every pure state on $\rC^*(\Pi(\fA))$ has the unique extension property with respect to $\pi(\fA)$.

\item[\rm{(vi)}] There is a family of pure states on $\rC^*(\Pi(\fA))$ which separate $(\Pi-\pi)(\fA)$ and have the unique extension property with respect to $\pi(\fA)$.

\end{enumerate}
\end{theorem}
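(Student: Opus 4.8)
The plan is to organize the six conditions into two short cycles that both pass through the ``collapsed'' pair (i)/(ii), so that the boundary-representation hypothesis is invoked only in a single reusable argument. I would first record the standing observation that, since $\pi$ is a $*$-homomorphism and $\fA=\rC^*(\fS)$, one has $\B:=\pi(\fA)=\rC^*(\pi(\fS))=\rC^*(\Pi(\fS))\subseteq \rC^*(\Pi(\fA))=:\fC$, so that restricting states from $\fC$ to $\B$ always makes sense. Now (i) $\Rightarrow$ (ii) is trivial, and conversely (ii) forces $\Pi(\fA)\subseteq\B$, whence $\fC=\rC^*(\Pi(\fA))\subseteq\B$ and so $\fC=\B$. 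In this degenerate situation (iii) and (v) are immediate, since a pure state on $\fC=\B$ restricts to itself and is trivially its own unique extension; moreover, as pure states separate the points of $\fC$, they separate $(\Pi-\pi)(\fA)$, giving (iii) $\Rightarrow$ (iv) and (v) $\Rightarrow$ (vi). Thus essentially all of the content is carried by the two converse arrows (iv) $\Rightarrow$ (i) and (vi) $\Rightarrow$ (i).

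For (iv) $\Rightarrow$ (i), I would show that each state in the separating family annihilates $(\Pi-\pi)(\fA)$. Fix such a state $\omega$ on $\fC$ whose restriction $\omega|_\B$ is pure, and let $(\rho_\omega,\H_\omega,\xi_\omega)$ be its GNS triple. Setting $\K:=\ol{\rho_\omega(\B)\xi_\omega}$, this subspace is $\rho_\omega(\B)$-invariant and contains $\xi_\omega$ (as $\B$ is unital), so the compression $\Psi:=P_\K\rho_\omega(\cdot)P_\K:\fC\to B(\K)$ is unital completely positive and restricts on $\B$ to the GNS representation $\sigma$ of the pure state $\omega|_\B$, which is therefore an \emph{irreducible} $*$-representation of $\B$. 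The crucial point is that $\sigma\circ\pi$ is then an irreducible $*$-representation of $\fA$, as its image $\sigma(\B)$ acts irreducibly; by hypothesis it is a boundary representation, so $(\sigma\circ\pi)|_\fS$ has the unique extension property. Since $\Psi\circ\Pi:\fA\to B(\K)$ is unital completely positive and agrees on $\fS$ with the $*$-representation $\Psi\circ\pi=\sigma\circ\pi$ (using $\Pi=\pi$ on $\fS$), the unique extension property forces $\Psi\circ\Pi=\Psi\circ\pi$ on all of $\fA$. Evaluating at the cyclic vector $\xi_\omega\in\K$ and using $P_\K\xi_\omega=\xi_\omega$ gives $\omega(\Pi(a))=\ip{\Psi(\Pi(a))\xi_\omega,\xi_\omega}=\ip{\Psi(\pi(a))\xi_\omega,\xi_\omega}=\omega(\pi(a))$ for every $a\in\fA$, and the separation property then yields $\Pi=\pi$.

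The implication (vi) $\Rightarrow$ (i) would follow the same template, with the unique extension property of the pure state $\omega$ with respect to $\B$ taking over the role played above by purity of $\omega|_\B$: it is designed precisely so that the two states $a\mapsto\omega(\Pi(a))$ and $a\mapsto\omega(\pi(a))$ on $\fA$, which already agree on $\fS$, are forced to coincide after reduction to a boundary representation, and the remaining separation argument finishes the proof. The translation between the pure restriction property and this unique extension property for states is the bookkeeping encapsulated in the constituent results. I expect the main obstacle to be exactly this converse direction, and within it the verification that the GNS compression $\Psi$ genuinely restricts to a $*$-representation on $\B$, so that the boundary hypothesis has teeth; equivalently, the delicate step is manufacturing from a single (possibly impure) state on $\fC$ a bona fide unital completely positive extension problem over an irreducible representation of $\fA$. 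Everything else reduces either to the collapse $\fC=\B$ or to the fact that pure states separate points.
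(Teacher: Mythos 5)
Your proposal is correct and is essentially the paper's own proof: your GNS-compression map $\Psi=P_{\K}\rho_\omega(\cdot)P_{\K}$ is exactly the map $W^*\sigma_\psi(\cdot)W$ constructed in Lemma \ref{L:pure} via Lemma \ref{L:stinespring}, the boundary-representation hypothesis is applied to the irreducible representation $\sigma\circ\pi$ in the same way, and the remaining implications rest on the same collapse $\rC^*(\Pi(\fA))=\pi(\fA)$ and separation observations that drive Theorem \ref{T:purerestriction}, Corollary \ref{C:equivinclusion} and Theorem \ref{T:purestateuep}. The only point worth making explicit is that in (vi)$\Rightarrow$(i) the unique extension property cannot literally ``take over the role'' of purity inside the template (the template consumes irreducibility of the compressed representation of $\pi(\fA)$, which UEP alone does not give); as your closing sentence concedes, one first uses the standard convexity fact recorded in Lemma \ref{L:srchar}(1) that a pure state with the unique extension property restricts to a pure state, and then runs your (iv)$\Rightarrow$(i) argument.
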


The other main result of Section \ref{S:charhyper} provides evidence supporting Arveson's conjecture (Theorem \ref{T:nosp}).

\begin{theorem}\label{T:mainnosp}
Let $\fS$ be an operator system and let $\fA=\rC^*(\fS)$. Assume that every irreducible $*$-representation of $\fA$ is a boundary representation for $\fS$. Let $\pi:\fA\to B(\H)$ be a unital $*$-representation and let $\Pi:\fA\to B(\H)$ be a unital completely positive extension of $\pi|_\fS$.  Then, the subspace $(\pi-\Pi)(\fA)$ contains no strictly positive element.
\end{theorem}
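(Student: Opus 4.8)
The plan is to argue by contradiction: suppose $(\pi-\Pi)(\fA)$ contains a strictly positive element $h=(\pi-\Pi)(a)$, so that $h\geq\epsilon 1$ for some $\epsilon>0$. First I would normalize. Decomposing $a=a_1+ia_2$ with $a_1,a_2$ self-adjoint, the relation $h=h^*$ yields $(\pi-\Pi)(a_2)=0$, so I may replace $a$ by $a_1$ and assume $a=a^*$. The difficulty is then concentrated in a single self-adjoint $a$ with $\pi(a)-\Pi(a)\geq\epsilon 1$, that is, $\langle\pi(a)\xi,\xi\rangle-\langle\Pi(a)\xi,\xi\rangle\geq\epsilon$ for every unit vector $\xi\in\H$.

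The mechanism I would exploit is that a strictly positive element is seen by every state, whereas the boundary hypothesis manufactures states blind to the difference between $\pi$ and $\Pi$. Suppose first that $\pi$ has an irreducible subrepresentation, carried by a reducing subspace $\H_0\subseteq\H$; write $\rho=\pi|_{\H_0}$, a boundary representation by assumption. Then $\Phi(x):=P_{\H_0}\Pi(x)|_{\H_0}$ defines a unital completely positive map $\fA\to B(\H_0)$, and for $s\in\fS$ one has $\Phi(s)=P_{\H_0}\pi(s)|_{\H_0}=\rho(s)$, since $\H_0$ reduces $\pi$ and $\Pi$ agrees with $\pi$ on $\fS$. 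Thus $\Phi$ is a unital completely positive extension of $\rho|_\fS$, and the unique extension property of the boundary representation $\rho$ forces $\Phi=\rho$. Consequently $\langle\Pi(a)\xi,\xi\rangle=\langle\rho(a)\xi,\xi\rangle=\langle\pi(a)\xi,\xi\rangle$ for every unit $\xi\in\H_0$, contradicting $\langle(\pi(a)-\Pi(a))\xi,\xi\rangle\geq\epsilon$. This settles the case where $\pi$ admits an irreducible subrepresentation.

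To treat a general $\pi$ --- which need not contain any irreducible subrepresentation, and for which $\Pi(a)$ need not be decomposable along any direct integral of $\pi$ --- I would recast the previous step purely in terms of states, where the results of Section \ref{S:charhyper} apply. The vector $\xi$ above enters only through the two states $x\mapsto\langle\pi(x)\xi,\xi\rangle$ and $x\mapsto\langle\Pi(x)\xi,\xi\rangle$ on $\fA$: they agree on $\fS$, are pure, and are forced to coincide by the unique extension property. I would therefore work with pure states on $\rC^*(\Pi(\fA))$ whose restrictions to $\pi(\fA)$ are pure and enjoy the unique extension property; under the standing boundary hypothesis, the circle of ideas behind Theorem \ref{T:mainpurerestriction} provides such pure states in abundance, and each must assign equal values to $\pi(a)$ and to $\Pi(a)$. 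Since $h\geq\epsilon 1$, every state evaluates $h$ to at least $\epsilon$, while the unique extension property forces this value to vanish --- the desired contradiction.

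The hard part will be this final transfer: controlling the value of an extending pure state on $\Pi(a)$, an element lying outside $\pi(\fA)$. The unique extension property pins down the behaviour of a pure state of $\rC^*(\Pi(\fA))$ on $\pi(\fA)$ and on $\fS$, but propagating this to $\Pi(a)$ --- equivalently, guaranteeing that enough such pure states exist to detect the strictly positive $h$ --- is precisely where the restriction and extension theorems for states developed earlier must be brought to bear, in place of the naive compression available in the subrepresentation case.
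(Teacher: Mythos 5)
Your special case is fine: when $\pi$ has an irreducible subrepresentation $\rho=\pi|_{\H_0}$, compressing $\Pi$ to $\H_0$ and invoking the unique extension property of the boundary representation $\rho$ is exactly the right move, and is essentially a spatial version of the paper's Lemma \ref{L:pure}. But the general case, which is the actual content of the theorem, has a genuine gap that you yourself flag: you propose to find ``pure states on $\rC^*(\Pi(\fA))$ whose restrictions to $\pi(\fA)$ are pure and enjoy the unique extension property,'' claiming that the circle of ideas behind Theorem \ref{T:mainpurerestriction} provides them in abundance. It does not: Theorem \ref{T:purerestriction} and Theorem \ref{T:purestateuep} are \emph{equivalences} --- the existence of such families (separating $(\Pi-\pi)(\fA)$) is equivalent to the conclusion $\pi=\Pi$, which is precisely the open conjecture. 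Invoking them to produce the states would be circular, and Example \ref{E:notpurerestriction} shows that pure states need not restrict to pure states, so there is no free abundance to appeal to.

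The idea you are missing is that strict positivity lets you get away with far less. You do not need $\psi$ to be pure on $\rC^*(\Pi(\fA))$, you do not need the unique extension property for states, and you never need to evaluate $\psi$ on $\Pi(a)$ and $\pi(a)$ separately. Take \emph{any} pure state $\omega$ on $\pi(\fA)$ (Krein--Milman gives one) and extend it by Arveson/Hahn--Banach to a state $\psi$ on $\rC^*(\Pi(\fA))$, using the inclusion (\ref{Eq:inclusion}). Then $\psi|_{\pi(\fA)}=\omega$ is pure \emph{by construction}, so Lemma \ref{L:pure} (the Stinespring argument: the GNS representation of $\omega$ is irreducible, hence composes with $\pi$ to a boundary representation, forcing uniqueness) yields $\psi\circ\pi=\psi\circ\Pi$, i.e.\ $\psi\bigl(\pi(a)-\Pi(a)\bigr)=0$. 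On the other hand, if $h=\pi(a)-\Pi(a)\geq\epsilon I$, then positivity of the state alone gives $\psi(h)\geq\epsilon>0$, a contradiction. This is how the paper argues (via Lemma \ref{L:nosp}, where Krein--Milman is used to pass from pure states to all states, though a single pure state suffices for Theorem \ref{T:nosp}). The ``hard transfer'' you describe --- controlling an extending state on the element $\Pi(a)$ outside $\pi(\fA)$ --- evaporates because only the difference $\psi(\pi(a))-\psi(\Pi(a))$ matters, and Lemma \ref{L:pure} kills it outright.
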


In Section \ref{S:uep}, we delve deeper into the unique extension property for states. Based on a general construction (Theorem \ref{T:famuep}), we exhibit natural examples where the unique extension property is satisfied by an abundance of states, which is relevant in view of Theorem \ref{T:mainpurerestriction}.

Finally, in Section \ref{S:unperfor} we introduce the notion of an unperforated pair. A pair $(\fS,\fT)$ of self-adjoint subspaces in a unital $\rC^*$-algebra is said to be \emph{unperforated} if whenever $a\in \fS$ and $b\in \fT$ are self-adjoint elements with $a\leq b$, we may find another self-adjoint element $b'\in \fT$ such that $\|b'\|\leq \|a\|$ and $a\leq b'\leq b$. This provides a mechanism to construct families of states with pure restrictions (Theorem \ref{T:unpstate}). The precise relation to hyperrigidity is illustrated in the following (Corollaries \ref{C:unphyper} and \ref{C:unphypercomm}).

\begin{theorem}\label{T:mainunphyper}
Let $\fS$ be a separable operator system and let $\fA=\rC^*(\fS)$. Assume that every irreducible $*$-representation of $\fA$ is a boundary representation for $\fS$. Let $\pi:\fA\to B(\H)$ be a unital $*$-representation and let $\Pi:\fA\to B(\H)$ be a unital completely positive extension of $\pi|_\fS$. Then,  the pair $((\Pi-\pi)(\fA), \pi(\fA))$ is unperforated if and only if $\Pi=\pi$. In particular, this is satisfied if $(\Pi-\pi)(\fA)$ commutes with $\pi(\fA)$.
\end{theorem}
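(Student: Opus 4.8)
The plan is to treat the two implications of the equivalence separately and then dispose of the commuting case. Throughout, write $\fB=\rC^*(\Pi(\fA))$ and observe that $\pi(\fA)$ is a \emph{unital} $\rC^*$-subalgebra of $\fB$: since $\pi(\fS)=\Pi(\fS)\subset\Pi(\fA)$ we get $\pi(\fA)=\rC^*(\pi(\fS))\subset\fB$, and $\Pi(1)=\pi(1)=1$ is the unit of $\fB$. Thus both $\pi(\fA)$ and the self-adjoint subspace $(\Pi-\pi)(\fA)$ sit inside $\fB$, and the order relation governing unperforatedness is the one inherited from $\fB$. The implication ``$\Pi=\pi\Rightarrow$ unperforated'' is immediate, since then $(\Pi-\pi)(\fA)=\{0\}$ and for $a=0\le b$ the choice $b'=0$ satisfies every requirement.

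For the converse the strategy is to manufacture, out of unperforatedness, a family of states as in Theorem~\ref{T:mainpurerestriction}(iv), and then invoke the implication (iv)$\Rightarrow$(i). Fix a nonzero self-adjoint $u\in(\Pi-\pi)(\fA)$; after rescaling assume $\|u\|=1$, and after replacing $u$ by $-u$ if necessary (still an element of the subspace) assume $1\in\Spec(u)$, so that some state $\sigma_0$ on $\fB$ has $\sigma_0(u)=1$. For a state $\phi$ on $\pi(\fA)$ consider the upper envelope $e(\phi)=\sup\{\rho(u):\rho\text{ a state on }\fB,\ \rho|_{\pi(\fA)}=\phi\}$; by the standard formula for the values attained by state extensions, $e(\phi)=\inf\{\phi(v):v\in\pi(\fA)_{\mathrm{sa}},\ v\ge u\}$, with the supremum attained. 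Here unperforatedness enters: given $v\in\pi(\fA)_{\mathrm{sa}}$ with $v\ge u$, it produces $w\in\pi(\fA)_{\mathrm{sa}}$ with $u\le w\le v$ and $\|w\|\le\|u\|=1$, so $\phi(w)\le\phi(v)$. Since also $\K:=\{w\in\pi(\fA)_{\mathrm{sa}}:u\le w,\ \|w\|\le1\}\subset\{v:v\ge u\}$, we conclude $e(\phi)=\inf\{\phi(w):w\in\K\}$.

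The gain from this reduction is that every $w\in\K$ satisfies $\max\Spec(w)=1$: from $w\ge u$ we get $\sigma_0(w)\ge\sigma_0(u)=1$, while $\|w\|\le1$ forces $\max\Spec(w)\le1$. Hence the level set $M=\{\phi:e(\phi)=1\}$ coincides with $\bigcap_{w\in\K}\{\phi:\phi(w)=\max\Spec(w)\}$, an intersection of faces of the state space of $\pi(\fA)$, and is therefore itself a face; it is nonempty because $\sigma_0|_{\pi(\fA)}\in M$. By Krein--Milman $M$ has an extreme point $\psi_0$, which, being extreme in a face, is a pure state of $\pi(\fA)$. As $e(\psi_0)=1$ and the envelope is attained, $\psi_0$ extends to a state $\rho_u$ on $\fB$ with $\rho_u(u)=1\ne0$. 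Letting $u$ range over the nonzero self-adjoint elements of $(\Pi-\pi)(\fA)$ gives a family of states on $\fB$ restricting to pure states on $\pi(\fA)$ and separating $(\Pi-\pi)(\fA)$ (an arbitrary nonzero $x$ in this self-adjoint subspace has a nonzero real or imaginary part, detected by the corresponding $\rho_u$). Theorem~\ref{T:mainpurerestriction} then delivers $\Pi=\pi$. I expect the delicate point to be exactly this envelope computation: one must verify that unperforatedness converts the unconstrained dominating elements $v\ge u$ into norm-controlled interpolants $w$, for only then is the maximum of $e$ pinned on a genuine face of the state space rather than at an interior, possibly mixed, state---and it is that face structure that guarantees a pure restriction. (Notably, the separability hypothesis does not seem essential to this route; it may enter only through the results cited.)

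Finally, for the commuting case it suffices, by the equivalence just established, to check that $((\Pi-\pi)(\fA),\pi(\fA))$ is unperforated. Given self-adjoint $a\in(\Pi-\pi)(\fA)$ and $b\in\pi(\fA)$ with $a\le b$, put $m=\|a\|$ and $b'=f(b)$ with $f(t)=\min(t,m)$. Since $\pi(\fA)$ is a unital $\rC^*$-algebra, $b'\in\pi(\fA)$, and $b\ge a\ge -m$ gives $\Spec(b)\subset[-m,\infty)$, whence $\|b'\|\le m=\|a\|$ and $b'\le b$. Because $a$ commutes with $b$ it commutes with $b'$, so passing to the commutative $\rC^*$-algebra they generate, Gelfand theory yields $b'=\min(b,m)\ge a$ pointwise, as both $b\ge a$ and $m\ge a$. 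Thus $a\le b'\le b$ with $\|b'\|\le\|a\|$, the pair is unperforated, and $\Pi=\pi$ follows.
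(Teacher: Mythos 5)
Your proof is correct, but the key direction (unperforated $\Rightarrow$ $\Pi=\pi$) follows a genuinely different route from the paper's. The paper reduces, exactly as you do, to producing a family of states on $\rC^*(\Pi(\fA))$ separating $(\Pi-\pi)(\fA)$ and restricting to pure states on $\pi(\fA)$ (Theorem~\ref{T:purerestriction}); but it obtains such states from a standalone result on unperforated pairs (Theorem~\ref{T:unpstate}), proved by contradiction using Choquet's integral representation (Theorem~\ref{T:choquet}): a norm-attaining state $\theta$ is disintegrated over $\S_p(\pi(\fA))$, unperforatedness yields norm-one interpolants $t'_\chi$ whose sub-level sets $A_\chi$ are $\mu$-null weak-$*$ open sets covering $\S_p(\pi(\fA))$, and separability extracts a countable subcover, contradicting $\mu(\S_p(\pi(\fA)))=1$. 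You instead work with the upper envelope $e(\phi)$ directly: Lemma~\ref{L:infext} plus unperforatedness lets you compute $e$ over the norm-controlled set $\K$, every $w\in\K$ has $\max\Spec(w)=\|u\|$, so $\{e=\|u\|\}$ is a nonempty weak-$*$ closed \emph{face} of $\S(\pi(\fA))$, and a Krein--Milman extreme point of it is a pure state extending (by attainment in Lemma~\ref{L:infext}) to a state of full value at $u$. Your argument buys two things: it avoids Choquet theory and measure-theoretic covering entirely, and---as you note---it never uses separability of $\fS$, so it actually proves the theorem (and in effect Theorem~\ref{T:unpstate} for $\fT$ a $\rC^*$-algebra) without that hypothesis, a genuine strengthening; what the paper's route buys is the more general free-standing Theorem~\ref{T:unpstate} about pairs $(\fS,\fT)$ with $\fT$ any separable operator system, recorded for independent use. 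Your treatment of the commuting case is essentially the paper's Proposition~\ref{P:unpcomm}: your truncation $f(t)=\min(t,m)$ agrees on $\Spec(b)\subset[-m,\infty)$ with the paper's three-piece truncation, and the Gelfand-theory verification is the same.
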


Unperforated pairs appear to be elusive in the absence of some form of commutativity. Accordingly, we aim to find a meaningful relaxation of that notion. Based on recent advances in the tensor theory of operator systems and the so-called tight Riesz interpolation property, we propose that the weak expectation property is an appropriate relaxation. Our position is substantiated by the following result (Theorem \ref{T:weakunperexample}).

\begin{theorem}\label{T:mainunperexample}
Let $\fA$ be a unital $\rC^*$-algebra and let $\fB\subset \fA$ be a unital separable $\rC^*$-subalgebra with the weak expectation property. Let $a\in \fA$ be a self-adjoint element and let $\eps>0$. Then, there is a sequence $(\beta_n)_n$ of self-adjoint elements in $\fB$ with the following properties.
\begin{enumerate}

\item[\rm{(1)}] We have  $\|\beta_n\|\leq (1+\eps)\|a\|$ for every $n\in \bN$ and
\[
\limsup_{n\to\infty}\|\beta_n\|\leq \|a\|.
\]

\item[\rm{(2)}] We have
\[
\limsup_{n\to \infty} \psi(\beta_n)\leq \inf\{ \psi(b):b\in \fB, b\geq a\}
\]
and 
\[
\sup\{\psi(c): c\in \fB,  c\leq a\} \leq \liminf_{n\to \infty} \psi(\beta_n)
\]
for every state $\psi$ on $\fB$.
\end{enumerate}
\end{theorem}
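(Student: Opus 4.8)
The plan is to exploit the weak expectation property to manufacture, inside the bidual $\fB^{**}$, a single element that sits between all the $\fB$-lower bounds and all the $\fB$-upper bounds of $a$, and then to push this configuration back down into $\fB$ with an arbitrarily small error controlled by $\eps$.

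First I would record the standard consequence of the weak expectation property: viewing $\fB\subset\fA$ in a faithful representation, there is a unital completely positive map $\Phi\colon\fA\to\fB^{**}$ with $\Phi|_{\fB}=\id_{\fB}$, the canonical embedding. Put $x=\Phi(a)\in\fB^{**}$, a self-adjoint element with $\|x\|\le\|a\|$. If $b\in\fB$ is self-adjoint with $b\ge a$, then positivity of $\Phi$ together with $\Phi(b)=b$ gives $b-x=\Phi(b-a)\ge 0$, so $x\le b$; symmetrically $x\ge c$ whenever $c\in\fB$ is self-adjoint with $c\le a$. Thus $x$ is an exact interpolant in $\fB^{**}$:
\[
c\le x\le b\qquad\text{for all self-adjoint }c,b\in\fB\text{ with }c\le a\le b .
\]
Testing against a state $\psi$ on $\fB$, so that $\psi\circ\Phi$ is a state on $\fA$ extending $\psi$, already yields $L(\psi)\le\psi(x)\le U(\psi)$, where $U(\psi)=\inf\{\psi(b):b\in\fB,\,b\ge a\}$ and $L(\psi)=\sup\{\psi(c):c\in\fB,\,c\le a\}$ are the two quantities in item (2); in particular $L\le U$ pointwise.

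Next I would reduce the two state estimates to a uniform order-theoretic statement. For a fixed tolerance $\delta>0$, the requirement $\psi(\beta)\le U(\psi)+\delta$ for every state $\psi$ is, after interchanging the quantifiers and using $\sup_{\psi}\psi(y)=\max\sigma(y)$ for self-adjoint $y$, exactly equivalent to
\[
\beta\le b+\delta 1\quad\text{for every self-adjoint }b\in\fB\text{ with }b\ge a,
\]
and likewise $\psi(\beta)\ge L(\psi)-\delta$ for all $\psi$ is equivalent to $\beta\ge c-\delta 1$ for every self-adjoint $c\in\fB$ with $c\le a$. Granting, for each $\delta>0$, a self-adjoint $\beta\in\fB$ with $\|\beta\|\le(1+\delta)\|a\|$ satisfying both families of inequalities, I would finish by taking $\delta_n=\min(\eps,1/n)$ and letting $\beta_n$ be the corresponding element: then $\|\beta_n\|\le(1+\eps)\|a\|$ for all $n$, while $\delta_n\to0$ forces $\limsup_n\|\beta_n\|\le\|a\|$, and for each fixed state the chain $L(\psi)-\delta_n\le\psi(\beta_n)\le U(\psi)+\delta_n$ gives precisely the two asymptotic inequalities of item (2).

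The heart of the argument, and the step I expect to be the \emph{main obstacle}, is therefore the approximate interpolation inside $\fB$ itself: given the exact interpolant $x\in\fB^{**}$, produce for each $\delta>0$ a genuine element $\beta\in\fB$ that lies $\delta$-below every $\fB$-upper bound and $\delta$-above every $\fB$-lower bound of $a$, with norm at most $(1+\delta)\|a\|$. A mere weak$^*$ (Kaplansky) approximation of $x$ by elements of $\fB$ does not suffice, since the order relations we require are not preserved under weak$^*$ limits; this is exactly the point where, in the commutative case, one would invoke Edwards' separation theorem on the simplex of states, and where non-commutativity obstructs a naive interpolation. Here I would bring in the weak expectation property a second time, through its description via the tight Riesz interpolation property in the operator-system tensor theory, in order to convert the feasibility of the interpolation in $\fB^{**}$ (witnessed by $x$) into approximate feasibility in $\fB$. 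Separability of $\fB$ enters to tame the two infinite families of order constraints: the sets of self-adjoint upper and lower $\fB$-bounds of $a$ admit countable norm-dense subfamilies, so by norm-continuity of the order it is enough to interpolate, with a slightly enlarged tolerance, against countably many constraints, after which a Hahn--Banach/Krein--Smulian separation argument, fed by the bidual witness $x$, produces the desired $\beta\in\fB$. Assembling these pieces yields the sequence $(\beta_n)_n$ with the stated norm control and state inequalities.
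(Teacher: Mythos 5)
Your preliminary observations are correct: the weak expectation property does give a ucp map $\Phi\colon\fA\to\fB^{**}$ fixing $\fB$, so $x=\Phi(a)$ is an exact order-interpolant, and your equivalence between ``$\psi(\beta)\le U(\psi)+\delta$ for every state $\psi$'' and ``$\beta\le b+\delta I$ for every self-adjoint $b\in\fB$ with $b\ge a$'' is valid (the numerical range argument is fine). But this reduction is exactly where the proof breaks down: it silently interchanges quantifiers, replacing the conclusion of the theorem, which is asymptotic \emph{for each fixed state} (for every $\psi$, $\limsup_n\psi(\beta_n)\le U(\psi)$), by a requirement that is \emph{uniform over all states at each fixed $n$}, namely that a single $\beta\in\fB$ should $\delta$-interpolate the entire infinite families of upper and lower bounds. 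That statement is strictly stronger than the theorem, and it is false in general, so no Hahn--Banach or Krein--Smulian argument fed by $x$ can produce your $\beta$: the element simply does not exist. Concretely, take $\fB=C[0,1]\subset\fA=L^\infty[0,1]$ (so $\fB$ is unital, separable, nuclear, hence has the weak expectation property) and $a=\chi_{(1/2,1]}$. A continuous $b$ satisfies $b\ge a$ iff $b\ge 1$ on $[1/2,1]$ and $b\ge 0$ on $[0,1/2]$; a continuous $c$ satisfies $c\le a$ iff $c\le 0$ on $[0,1/2]$ and $c\le 1$ on $[1/2,1]$. Taking pointwise infima over the first family and suprema over the second, any $\beta\in C[0,1]$ with $\beta\le b+\delta$ for all such $b$ and $\beta\ge c-\delta$ for all such $c$ must satisfy $\beta\le\delta$ on $[0,1/2)$ and $\beta\ge 1-\delta$ on $(1/2,1]$, which contradicts continuity at $1/2$ once $\delta<1/2$. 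Your countable-dense-subfamily reduction does not help: since the positive cone is norm closed, interpolating a norm-dense subfamily with tolerance $\delta$ already forces the full uniform statement with the same tolerance. Note also that the tight Riesz interpolation property you invoke is intrinsically a \emph{finite} interpolation property; it cannot be applied to infinitely many constraints at once, and the example above shows why no such strengthening can hold.

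The paper's proof succeeds precisely because it never asks for uniformity over the infinite families. Fix countable norm-dense subsets $\{u_k\}_{k\in\bN}\subset\{b\in\fB:b\ge a\}$ and $\{\ell_j\}_{j\in\bN}\subset\{c\in\fB:c\le a\}$ (this is where separability enters). At stage $n$, apply the tight Riesz interpolation property of $\fB$ in $B(\H)$ (which follows from the weak expectation property by Kavruk's theorem) only to the finitely many strict inequalities $\ell_j-n^{-1}I<a<u_k+n^{-1}I$ for $1\le j,k\le n$, together with $-(1+\eps n^{-1})\|a\|I<a<(1+\eps n^{-1})\|a\|I$; this produces $\beta_n\in\fB$ satisfying the norm bounds of item (1). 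For item (2), fix a state $\psi$ and indices $j,k$; for all $n\ge\max(j,k)$ one has $\psi(\ell_j)-n^{-1}\le\psi(\beta_n)\le\psi(u_k)+n^{-1}$, so $\liminf_n\psi(\beta_n)\ge\psi(\ell_j)$ and $\limsup_n\psi(\beta_n)\le\psi(u_k)$, and density of the chosen subfamilies upgrades these to the full supremum and infimum. The crucial slack is that $\beta_n$ is allowed to violate, even badly, every constraint of index larger than $n$; the theorem's conclusion only sees each constraint eventually, state by state, which is why it is true while your uniform version is not.
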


As an application of the previous result, we refine Theorem \ref{T:purestateuep} in the presence of the weak expectation property (Corollary \ref{C:wephyper}).

\section{Preliminaries}\label{S:prelim}
\subsection{Operator systems and completely positive maps}\label{SS:OACP}
Let $B(\H)$ denote the $\rC^*$-algebra of bounded linear operators on some Hilbert space $\H$. An \emph{operator system} $\fS$ is a unital self-adjoint subspace of $B(\H)$. Due to work of Choi and Effros \cite{choi1977}, operator systems can be defined in a completely abstract fashion, but the previous ``concrete" definition will suffice for our present purposes. Likewise, we will always assume that $\rC^*$-algebras are concretely represented on a Hilbert space. For each positive integer $n$, we denote by $\bM_n(\fS)$ the complex vector space of $n\times n$ matrices with entries in $\fS$, and regard it as a unital self-adjoint subspace of $B(\H^{(n)})$. A linear map 
\[
\phi:\fS\to B(\H_\phi)
\]
induces a linear map 
\[
\phi^{(n)}:\bM_n(\fS)\to B(\H_\phi^{(n)})
\]
defined as
\[
\phi^{(n)}([s_{ij}]_{i,j})=[\phi(s_{ij})]_{i,j}
\]
for each $[s_{ij}]_{i,j}\in \bM_n(\fS).$ The map $\phi$ is said to be \emph{completely positive} if $\phi^{(n)}$ is positive for every positive integer $n$.

For most of the paper, we will be dealing with unital completely positive maps with one-dimensional range. Such a map $\psi:\fS\to \bC$ is called a \emph{state}. The set of states on $\fS$ is denote by $\S(\fS)$. It is a weak-$*$ closed convex subset of the closed unit ball of the dual space $\fS^*$, and so in particular it is compact in the weak-$*$ topology.

The structure of unital completely positive maps on $\rC^*$-algebras is elucidated by the Stinespring construction, a generalization of the classical Gelfand-Naimark-Segal (GNS) construction associated to a state. More precisely,
given a unital $\rC^*$-algebra $\fA$ and a unital completely positive map $\phi:\fA\to B(\H)$, there is a Hilbert space $\fH_\phi$, an isometry $V_\phi:\H\to \fH_\phi$ and a unital $*$-representation $\sigma_\phi:\fA\to B(\fH_\phi)$ satisfying
\[
\phi(a)=V_\phi^* \sigma_\phi(a)V_\phi, \quad a\in \fA
\]
and  $\fH_\phi=[\sigma_\phi(\fA)V_\phi\H].$ Here and throughout, given a subset $\V\subset \H$ we denote by $[\V]$ the smallest closed subspace of $\H$ containing $\V$.
The triple $(\sigma_\phi, \fH_\phi,V_\phi)$ is called the \emph{Stinespring representation} of $\phi$, and it is unique up to unitary equivalence.
The following fact is standard.

\begin{lemma}\label{L:stinespring}
Let $\fA$ be a unital $\rC^*$-algebra and let $\fB\subset \fA$ be a unital $\rC^*$-subalgebra. Let $\psi:\fA\to B(\H)$ be a unital completely positive map and let $\phi=\psi|_\fB$. Then, there is an isometry $W:\fH_\phi\to \fH_\psi$ such that $WV_\phi=V_\psi$ and
\[
\sigma_\psi(b) W=W\sigma_\phi(b), \quad b\in \fB.
\]
\end{lemma}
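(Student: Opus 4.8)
The plan is to realize the minimal Stinespring space of $\phi$ as an invariant subspace of $\fH_\psi$ sitting inside the larger Stinespring space of $\psi$, and then to invoke the uniqueness of the minimal Stinespring representation. First I would observe that since $\phi=\psi|_\fB$, the triple $(\sigma_\psi|_\fB,\fH_\psi,V_\psi)$ is itself a (not necessarily minimal) Stinespring dilation of $\phi$: indeed, for every $b\in\fB$ we have $\phi(b)=\psi(b)=V_\psi^*\sigma_\psi(b)V_\psi$. The point is that this dilation need not satisfy the minimality requirement $\fH_\psi=[\sigma_\psi(\fB)V_\psi\H]$, because the span $\sigma_\psi(\fA)V_\psi\H$ is generally strictly larger than $\sigma_\psi(\fB)V_\psi\H$.

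Next I would set $\K=[\sigma_\psi(\fB)V_\psi\H]\subset\fH_\psi$ and check that $\K$ is invariant under $\sigma_\psi(\fB)$. This is immediate from the fact that $\sigma_\psi$ is a $*$-representation: for $b,b'\in\fB$ and $\xi\in\H$ one has $\sigma_\psi(b')\sigma_\psi(b)V_\psi\xi=\sigma_\psi(b'b)V_\psi\xi\in\K$, and the claim follows by continuity and density. Writing $\iota:\K\to\fH_\psi$ for the inclusion isometry and $\tau(b)=\sigma_\psi(b)|_\K$ for the compression, invariance shows that $\tau$ is a genuine unital $*$-representation of $\fB$ on $\K$ satisfying $\sigma_\psi(b)\iota=\iota\tau(b)$ for all $b\in\fB$. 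Moreover $V_\psi\H\subset\K$ (take $b=1$), so $V_\psi$ corestricts to an isometry $\H\to\K$, still denoted $V_\psi$, and the triple $(\tau,\K,V_\psi)$ is then a minimal Stinespring representation of $\phi$, since $[\tau(\fB)V_\psi\H]=[\sigma_\psi(\fB)V_\psi\H]=\K$.

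The main step is to compare the two minimal Stinespring representations $(\sigma_\phi,\fH_\phi,V_\phi)$ and $(\tau,\K,V_\psi)$ of $\phi$. By the uniqueness of the minimal Stinespring representation up to unitary equivalence, there is a unitary $U:\fH_\phi\to\K$ with $UV_\phi=V_\psi$ and $U\sigma_\phi(b)=\tau(b)U$ for every $b\in\fB$. Setting $W=\iota U:\fH_\phi\to\fH_\psi$ then yields an isometry, and it remains only to transport the two identities through $\iota$: we obtain $WV_\phi=\iota UV_\phi=\iota V_\psi=V_\psi$ and, using $\sigma_\psi(b)\iota=\iota\tau(b)$, $W\sigma_\phi(b)=\iota U\sigma_\phi(b)=\iota\tau(b)U=\sigma_\psi(b)\iota U=\sigma_\psi(b)W$, which is exactly the desired intertwining.

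The only genuine subtlety, and hence the step I would treat most carefully, is the verification that $(\tau,\K,V_\psi)$ is a bona fide \emph{minimal} Stinespring representation: one must confirm that the compression $\tau$ is multiplicative rather than merely completely positive, which is precisely where the invariance of $\K$ under $\sigma_\psi(\fB)$ is used, and that the minimality span is computed with $\fB$ in place of $\fA$. Once these are in place, everything else is formal and follows from the uniqueness clause of the Stinespring construction recalled above.
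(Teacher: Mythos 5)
Your proof is correct, but it is organized differently from the paper's. The paper constructs $W$ by hand: it computes
\[
\Bigl\|\sum_{j=1}^n \sigma_\psi(b_j)V_\psi\xi_j\Bigr\|^2=\sum_{j,k=1}^n\langle\phi(b_k^*b_j)\xi_j,\xi_k\rangle=\Bigl\|\sum_{j=1}^n \sigma_\phi(b_j)V_\phi\xi_j\Bigr\|^2,
\]
so that the assignment $\sum_j\sigma_\phi(b_j)V_\phi\xi_j\mapsto\sum_j\sigma_\psi(b_j)V_\psi\xi_j$ is well defined and isometric on the dense subspace $\operatorname{span}\,\sigma_\phi(\fB)V_\phi\H$ of $\fH_\phi$, and then extends it by continuity; the two required identities are immediate from the formula defining $W$. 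You instead exhibit $\K=[\sigma_\psi(\fB)V_\psi\H]$ as a second \emph{minimal} Stinespring dilation of $\phi$ (via invariance of $\K$ under the $*$-closed family $\sigma_\psi(\fB)$, which makes the restriction $\tau$ a genuine unital $*$-representation) and then cite the uniqueness theorem to get a unitary $U:\fH_\phi\to\K$, finally composing with the inclusion. The two arguments are close cousins -- the norm computation in the paper is precisely the proof of the uniqueness theorem you invoke, specialized to this situation -- but yours is modular where the paper's is self-contained: you get to treat uniqueness as a black box at the cost of carefully checking that $(\tau,\K,V_\psi)$ really is a minimal dilation (the reducing-subspace point you rightly flag, which needs $\fB$ to be self-adjoint, and the fact that $V_\psi\H\subset\K$ because $\fB$ contains the unit), while the paper never needs to name $\K$ or discuss compressions at all, and in particular never needs the full unitary form of uniqueness since only an isometry into $\fH_\psi$ is wanted.
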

\begin{proof}
We first note that if $b_1,\ldots,b_n\in \fB$ and $\xi_1,\ldots,\xi_n\in \H$ then
\begin{align*}
\left\|\sum_{j=1}^n \sigma_\psi(b_j)V_\psi \xi_j\right\|^2&=\sum_{j,k=1}^n\langle V_\psi^* \sigma_\psi(b_k^*b_j)V_\psi\xi_j,\xi_k\rangle =\sum_{j,k=1}^n\langle \psi(b_k^*b_j)\xi_j,\xi_k\rangle\\
&=\sum_{j,k=1}^n\langle \phi(b_k^*b_j)\xi_j,\xi_k\rangle=\sum_{j,k=1}^n\langle V_\phi^* \sigma_\phi(b_k^*b_j)V_\phi\xi_j,\xi_k\rangle\\
&=\left\|\sum_{j=1}^n \sigma_\phi(b_j)V_\phi \xi_j\right\|^2.
\end{align*}
Using that $\fH_\phi=[\sigma_\phi(\fB)V_\phi\H]$, a routine argument shows that there is an isometry $W:\fH_\phi\to \fH_\psi$ such that
\[
W\left( \sum_{j=1}^n \sigma_\phi(b_j)V_\phi \xi_j\right) =\sum_{j=1}^n \sigma_\psi(b_j)V_\psi \xi_j
\]
for every $b_1,\ldots,b_n\in \fB$ and $\xi_1,\ldots,\xi_n\in \H$. It follows readily that $WV_\phi=V_\psi$ and
\[
W\sigma_\phi(b)=\sigma_\psi(b)W, \quad b\in \fB.
\]
\end{proof}

\subsection{Purity, extreme points and Choquet integral representation}\label{SS:purity}

Let $\fS$ be an operator system. A completely positive map $\psi:\fS\to B(\H)$ is said to be \emph{pure} if whenever $\phi:\fS\to B(\H)$ is a completely positive map with the property that $\psi-\phi$ is also completely positive, we must have that $\phi=t\psi$ for some $0\leq t\leq 1$. It is known that the pure unital completely positive maps on a $\rC^*$-algebra are precisely those for which the associated Stinespring representations are irreducible \cite[Corollary 1.4.3]{arveson1969}.  

We let $\S_p(\fS)$ denote the collection of pure states. It is a standard fact that a state is pure if and only if it is an extreme point of $\S(\fS)$ (see for instance \cite[Proposition 2.5.5]{Dixmier1977}, the proof of which is easily adapted to the setting of an operator system). A subtlety arises for unital completely positive maps with higher dimensional ranges: it follows from \cite[Example 2.3]{WW1999} and \cite{farenick2000} that  a matrix state $\psi:\fS\to B(\bC^n)$ is pure if and only if it is a so-called \emph{matrix} extreme point. 

The following tool will be important for us. It follows from \cite[Theorem 4.2]{bishop1959} (see also \cite[Chapter 3]{phelps2001}). Recall that if $\fS$ is separable, then the weak-$*$ topology on $\S(\fS)$ is compact and metrizable.

\begin{theorem}\label{T:choquet}
Let $\fS$ be a separable operator system and let $\psi$ be a state on $\fS$. Then, there is a regular Borel probability measure on $\S(\fS)$ concentrated on $\S_p(\fS)$ and with the property that
\[
\psi(s)=\int_{\S_p(\fS)}\omega(s)d\mu(\omega), \quad s\in \fS.
\]
\end{theorem}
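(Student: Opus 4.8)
The plan is to recognize the statement as a direct instance of the Choquet integral representation theorem, so that the bulk of the argument consists in verifying that its hypotheses hold in our setting. First I would view $\S(\fS)$ as a compact convex subset $K$ of the dual space $\fS^*$ equipped with the weak-$*$ topology. Because $\fS$ is separable, the weak-$*$ topology restricted to the closed unit ball of $\fS^*$ is metrizable, and since $K$ is contained in this ball, $K$ is a metrizable compact convex subset of the locally convex space $(\fS^*, \text{weak-}*)$. The state $\psi$ of interest is a point of $K$.

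Next I would invoke the identification of extreme points already recorded above: a state on $\fS$ is pure precisely when it is an extreme point of $\S(\fS)$, so that $\ext K = \S_p(\fS)$. In the metrizable case, $\ext K$ is a $G_\delta$ subset of $K$, and in particular it is Borel. The Choquet--Bishop--de Leeuw theorem (in the form of \cite[Theorem 4.2]{bishop1959}, see also \cite[Chapter 3]{phelps2001}) then supplies a regular Borel probability measure $\mu$ on $K$ which represents $\psi$ as its barycenter and is concentrated on $\ext K = \S_p(\fS)$.

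Finally I would translate the barycenter property into the desired integral formula. For each $s \in \fS$, the evaluation map $\hat{s}:K \to \bC$ given by $\hat{s}(\omega)=\omega(s)$ is weak-$*$ continuous and affine, so applying the defining property of the barycenter to the (continuous, real-valued, affine) real and imaginary parts of $\hat{s}$ yields $\psi(s) = \int_K \omega(s)\, d\mu(\omega)$. Since $\mu$ is concentrated on $\S_p(\fS)$, this is exactly the integral appearing in the statement.

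The only genuinely substantive point, and hence the step to get right, is the metrizability of $K$ flowing from the separability of $\fS$: this is precisely what guarantees that $\ext K$ is Borel and that Choquet's theorem furnishes a measure actually carried by the pure states, rather than merely a measure maximal in the Choquet ordering with the weaker ``vanishing on Baire sets'' conclusion available in the non-separable case. Everything else is routine bookkeeping once this identification is in place.
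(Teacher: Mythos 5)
Your proposal is correct and follows essentially the same route as the paper, which simply invokes the Choquet--Bishop--de Leeuw theorem (citing \cite[Theorem 4.2]{bishop1959} and \cite[Chapter 3]{phelps2001}) together with the observation that separability of $\fS$ makes $\S(\fS)$ weak-$*$ compact and metrizable, so that pure states coincide with extreme points and the representing measure is genuinely concentrated on them. Your write-up just fills in the routine verifications (metrizability of the unit ball of $\fS^*$, Borel-ness of the extreme points, and the barycenter-to-integral translation) that the paper leaves implicit.
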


\subsection{Unique extension property, boundary representations and hyperrigidity}\label{SS:uep}

One important property of completely positive maps on operator systems is that they satisfy a generalization of the Hahn-Banach extension theorem. Indeed, let $\fS\subset B(\H)$ be an operator system and let $\phi:\fS\to B(\H_\phi)$ be a completely positive map. Then, by Arveson's extension theorem \cite{arveson1969}, there is another completely positive map $\psi:B(\H)\to B(\H_\phi)$ with the property that $\psi|_{\fS}=\phi$. In particular, a completely positive map on $\fS$ always admits at least one completely positive extension to any operator system $\fT\subset B(\H)$ containing $\fS$.  We denote the set of such extensions by $\E(\phi,\fT)$. This notation will be used consistently throughout the paper.

In general, the set of extensions may contain more than one element, and this possibility is one of the main themes of the paper. The following fact quantifies the freedom in choosing an extension, and it follows from a verbatim adaptation of the proof of \cite[Proposition 6.2]{arveson2011}.

\begin{lemma}\label{L:infext}
Let $\fS\subset \fT$ be operator systems and let $\phi$ be a state on $\fS$. Then,
\[
\max_{\psi\in \E(\phi,\fT)} \psi(t)= \inf\{\phi(s): s\in \fS, s\geq t \}
\]
and
\[
\min_{\psi\in\E(\phi,\fT)} \psi(t)= \sup\{\phi(s): s\in \fS, s\leq t \}
\]
whenever $t\in \fT$ is self-adjoint.
\end{lemma}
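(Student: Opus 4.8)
The plan is to prove the two extremal formulas by separately establishing each inequality, using Arveson's extension theorem together with the Hahn-Banach machinery for ordered vector spaces. By symmetry (replacing $t$ with $-t$), it suffices to treat the first formula for the maximum; the second then follows from
\[
\min_{\psi\in\E(\phi,\fT)}\psi(t)=-\max_{\psi\in\E(\phi,\fT)}\psi(-t)=-\inf\{\phi(s):s\in\fS,\ s\geq -t\}=\sup\{\phi(s):s\in\fS,\ s\leq t\}.
\]

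First I would verify the easy inequality $\max_{\psi}\psi(t)\leq\inf\{\phi(s):s\in\fS,\ s\geq t\}$. If $\psi\in\E(\phi,\fT)$ and $s\in\fS$ satisfies $s\geq t$, then since $\psi$ is a positive extension agreeing with $\phi$ on $\fS$, we have $\psi(t)\leq\psi(s)=\phi(s)$. Taking the infimum over such $s$ gives the bound for each fixed $\psi$, hence for the supremum over $\psi$. This also shows the left-hand quantity is at most the right-hand one before we even know the maximum is attained.

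The substantive direction is to produce, for a fixed self-adjoint $t\in\fT$, an extension $\psi\in\E(\phi,\fT)$ with $\psi(t)=\inf\{\phi(s):s\in\fS,\ s\geq t\}=:\gamma$. Here I would follow the standard ordered Hahn-Banach argument. On the real subspace $\fS_{\mathrm{sa}}+\bR t$ of self-adjoint elements, define a linear functional $\rho$ by $\rho(s+\lambda t)=\phi(s)+\lambda\gamma$; one checks this is well defined and dominated by the sublinear functional $p(x)=\inf\{\phi(s):s\in\fS_{\mathrm{sa}},\ s\geq x\}$, the key point being that $\gamma$ is precisely the largest value compatible with positivity in the $t$-direction. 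Extend $\rho$ by Hahn-Banach to a real-linear functional on $\fT_{\mathrm{sa}}$ dominated by $p$, which forces positivity (if $x\geq 0$ then $s=0$ is admissible so $p(x)\leq 0$... more carefully, $p(-x)\le 0$ gives $\rho(x)\ge 0$), and complexify to a state on $\fT$ restricting to $\phi$ on $\fS$. This produces the desired $\psi$ and shows the supremum is an attained maximum equal to $\gamma$.

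The main obstacle I anticipate is the bookkeeping around complete positivity versus mere positivity, and the well-definedness and dominance estimate for $\rho$. Since $\phi$ is a state (scalar-valued), completely positive and positive coincide, so the extension produced by Hahn-Banach is automatically a state once positivity and unitality are confirmed; this is exactly why the lemma is phrased for states rather than general completely positive maps, and it is what makes the scalar Hahn-Banach argument available in place of Arveson's extension theorem. The delicate step is checking $\rho\leq p$ on the two-dimensional-over-$\fS_{\mathrm{sa}}$ extension: for $\lambda>0$ this uses the definition of $\gamma$ as an infimum over dominating $s\geq t$, while for $\lambda<0$ it requires the complementary estimate $\gamma\geq\sup\{\phi(s'):s'\leq t\}$, which holds because any $s'\le t\le s$ forces $\phi(s')\le\phi(s)$. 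I would record this finiteness-and-ordering compatibility as the heart of the argument, after which the extension and the matching of values are formal.
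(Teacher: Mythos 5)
Your proof is correct and takes essentially the same route as the paper: the paper's own "proof" is a citation of (a verbatim adaptation of) the proof of Proposition 6.2 in Arveson's hyperrigidity paper \cite{arveson2011}, which is precisely this Hahn--Banach argument with the sublinear functional $p(x)=\inf\{\phi(s):s\in\fS_{\mathrm{sa}},\ s\geq x\}$ on the self-adjoint part of $\fT$. The only cosmetic difference is that agreement of the extended functional with $\phi$ on $\fS_{\mathrm{sa}}$ comes for free from the domination $\rho\leq p$ (since $p=\phi$ on $\fS_{\mathrm{sa}}$, domination forces equality there), so one may extend directly from the line $\bR t$ rather than encoding $\phi$ by hand on $\fS_{\mathrm{sa}}+\bR t$ as you do.
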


Let $\fS\subset \fT$ be operator systems. We say that a completely positive map $\psi:\fT\to B(\H_\psi)$ has the \emph{unique extension property with respect to $\fS$} if the restriction $\psi|_{\fS}$ admits only one completely positive extension to $\fT$, namely $\psi$ itself. An irreducible $*$-representation $\pi:\rC^*(\fS)\to B(\H_\pi)$ is said to be a \emph{boundary representation for $\fS$} if it has the unique extension property with respect to $\fS$. 

We advise the reader to exercise some care: in other works (such as \cite{arveson2008}) the use of the terminology ``unique extension property" is reserved for $*$-representations on $\rC^*(\fS)$. Our definition is more lenient as we do not restrict our attention to $*$-representations and no further relation is assumed between $\fS$ and $\fT$ beyond mere containment. We will recall this discrepancy in terminology whenever there is any risk of confusion.

These notions can be used to reformulate the property of hyperrigidity considered in the introduction. The following is \cite[Theorem 2.1]{arveson2011}; therein some special attention is paid to separability conditions, but a quick look at the proof reveals that the next result holds with no cardinality assumptions.

\begin{theorem}\label{T:arvhyperequiv}
Let $\fS$ be an operator system. Then, $\fS$ is hyperrigid if and only if every unital $*$-representation of $\rC^*(\fS)$ has the unique extension property with respect to $\fS$.
\end{theorem}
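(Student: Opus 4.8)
The plan is to prove the two implications separately, the reverse one (unique extension property $\Rightarrow$ hyperrigidity) being the substantial direction. Throughout I would freely invoke Arveson's extension theorem to manufacture completely positive extensions to $B(\H)$. For the forward direction, assume $\fS$ is hyperrigid, let $\pi:\rC^*(\fS)\to B(\H)$ be a unital $*$-representation, and let $\Pi$ be a unital completely positive extension of $\pi|_\fS$; the goal is $\Pi=\pi$. Since the definition of hyperrigidity only quantifies over \emph{injective} representations, I would first reduce to that case: fixing a faithful unital $*$-representation $\rho$ of $\rC^*(\fS)$, the representation $\pi\oplus\rho$ is injective and $\Pi\oplus\rho$ is a unital completely positive extension of $(\pi\oplus\rho)|_\fS$, so establishing $\Pi\oplus\rho=\pi\oplus\rho$ yields $\Pi=\pi$ after compressing to the first summand. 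Assuming $\pi$ injective, I identify $\rC^*(\fS)$ with $\pi(\rC^*(\fS))\subset B(\H)$, transport $\Pi$ to a unital completely positive map on $\pi(\rC^*(\fS))$, and extend it to $\td\Pi:B(\H)\to B(\H)$. Applying hyperrigidity to $\pi$ together with the \emph{constant} sequence $\Phi_n=\td\Pi$, the identity $\|\Phi_n(\pi(s))-\pi(s)\|=0$ on $\fS$ forces $\|\td\Pi(\pi(a))-\pi(a)\|=0$ for every $a\in\rC^*(\fS)$, that is, $\Pi=\pi$.

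For the reverse direction, assume every unital $*$-representation has the unique extension property with respect to $\fS$ and argue by contradiction. If hyperrigidity fails, there are an injective $*$-representation $\pi:\rC^*(\fS)\to B(\H_\pi)$, unital completely positive maps $\Phi_k:B(\H_\pi)\to B(\H_\pi)$ with $\|\Phi_k(\pi(s))-\pi(s)\|\to 0$ for $s\in\fS$, an element $a_0\in\rC^*(\fS)$, a number $\delta>0$, and (after passing to a subsequence, which preserves the convergence on $\fS$) unit vectors $\xi_k\in\H_\pi$ with $\|(\Phi_k(\pi(a_0))-\pi(a_0))\xi_k\|\geq\delta$. The obstruction is that the witnessing vectors $\xi_k$ vary with $k$, so a naive point-weak limit on $B(\H_\pi)$ does not detect the defect, and a plain direct sum $\bigoplus_k$ fails because $\bigoplus_k(\Phi_k(\pi(s))-\pi(s))$ is only blockwise null rather than norm null. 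I would remedy this with a Hilbert space ultrapower: fixing a free ultrafilter $\U$ on $\bN$, form $\hat\H=\ell^\infty(\H_\pi)/N_\U$, where $N_\U$ is the space of $\U$-null bounded sequences, a Hilbert space under $\langle[(\eta_k)],[(\zeta_k)]\rangle=\lim_\U\langle\eta_k,\zeta_k\rangle$.

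On $\hat\H$ I then define a unital $*$-representation $\hat\pi(a)[(\eta_k)]=[(\pi(a)\eta_k)]$ and a unital completely positive map $\hat\Psi(a)[(\eta_k)]=[(\Phi_k(\pi(a))\eta_k)]$, both well defined and bounded because the entries are contractive, complete positivity of $\hat\Psi$ following by pushing $\lim_\U$ through the positive quadratic forms attached to the completely positive maps $\Phi_k\circ\pi$. The crucial point is that for $s\in\fS$ the sequence $((\Phi_k(\pi(s))-\pi(s))\eta_k)$ is norm null, hence lies in $N_\U$, so $\hat\Psi$ coincides with $\hat\pi$ on $\fS$; thus $\hat\Psi$ is a unital completely positive extension of $\hat\pi|_\fS$. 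Since $\hat\pi$ is a unital $*$-representation, the standing hypothesis gives $\hat\Psi=\hat\pi$ on all of $\rC^*(\fS)$. But $[(\xi_k)]$ is a unit vector of $\hat\H$ with $\|(\hat\Psi(a_0)-\hat\pi(a_0))[(\xi_k)]\|=\lim_\U\|(\Phi_k(\pi(a_0))-\pi(a_0))\xi_k\|\geq\delta$, contradicting $\hat\Psi=\hat\pi$ and completing the argument.

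The main obstacle is the reverse direction, and precisely the passage from the sequential, vector-dependent failure of norm convergence to a single honest extension problem to which the unique extension property can be applied. The ultrapower is exactly the device that launders the varying vectors $\xi_k$ and the blockwise-null differences into a genuine equality $\hat\Psi|_\fS=\hat\pi|_\fS$; here the ultrafilter, rather than a mere quotient by norm-null sequences, is essential in order to retain a bona fide inner product on $\hat\H$. The remaining verifications—well-definedness, boundedness, and complete positivity of $\hat\pi$ and $\hat\Psi$—are routine once the construction is in place.
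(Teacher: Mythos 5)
Your proof is correct. Be aware, though, that the paper does not actually prove this statement: it quotes it as \cite[Theorem 2.1]{arveson2011}, remarking only that Arveson's proof goes through with no cardinality assumptions. So the right comparison is with Arveson's argument. Your forward direction (make $\pi$ injective by summing with a faithful representation, transport $\Pi$ through $\pi^{-1}$, extend to $B(\H)$ by Arveson's extension theorem, and feed the \emph{constant} sequence into the definition of hyperrigidity) is the standard argument and agrees with his. For the substantial direction, Arveson works with the asymptotic quotient $\rC^*$-algebra $\ell^\infty(B(\H_\pi))/c_0(B(\H_\pi))$: the sequence $(\Phi_k\circ\pi)_k$ induces a single unital completely positive map on the quotient agreeing on $\fS$ with the induced $*$-homomorphism, and the unique extension property --- upgraded to a form valid for unital $*$-homomorphisms into abstract unital $\rC^*$-algebras, which is where his separability bookkeeping is concentrated --- forces equality, i.e.\ norm convergence on all of $\rC^*(\fS)$. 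Your ultrapower is a concrete realization of the same device rather than a new idea: the canonical map sending a bounded sequence $(T_k)_k$ in $B(\H_\pi)$ to the operator $[(\eta_k)]\mapsto[(T_k\eta_k)]$ on $\hat{\H}$ is a unital $*$-homomorphism $\ell^\infty(B(\H_\pi))\to B(\hat{\H})$ annihilating $c_0(B(\H_\pi))$, so your $\hat\pi$ and $\hat\Psi$ are exactly the images of Arveson's two maps under a representation of (a further quotient of) his algebra. What your packaging buys: it merges the quotient construction and the choose-a-faithful-representation step into one move, stays entirely on Hilbert spaces, and makes it transparent that the theorem needs no separability hypotheses --- precisely the point of the paper's parenthetical remark; the price is that $\hat{\H}$ is wildly nonseparable, which is harmless for the statement as formulated here, and your observation that the ultrafilter (not a mere quotient by norm-null sequences) is what preserves the inner product is exactly right. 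One cosmetic slip: since the operator norm need not be attained, you can only select unit vectors $\xi_k$ with $\|(\Phi_k(\pi(a_0))-\pi(a_0))\xi_k\|\geq\delta/2$ (say), not $\geq\delta$; the contradiction is unaffected.
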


The driving force behind our work is the following conjecture of Arveson \cite{arveson2011}, which claims that it is sufficient to focus on irreducible $*$-representations to detect hyperrigidity.
\vspace{3mm}

\emph{Arveson's hyperrigidity conjecture.} An operator system $\fS$ is hyperrigid if every irreducible $*$-representation of $\rC^*(\fS)$ is a boundary representation for $\fS$.

\vspace{3mm}

To be precise, we should point out that Arveson was more cautious and restricted the operator system in his conjecture to be separable. We explain why this conjecture is especially sensible in that case. We may think of an arbitrary $*$-representation as some kind of integral of a family of irreducible $*$-representations against some measure. Since the irreducible $*$-representations are all assumed to have the unique extension property with respect to $\fS$, the question then becomes whether this property is preserved by the integration procedure. This rough sketch can be made precise, and in fact this was the philosophy used by Arveson in \cite{arveson2008}. One of the main contributions therein \cite[Theorem 6.1]{arveson2008} establishes that if the result of the integration procedure has the unique extension property with respect to $\fS$, then the integrand must have it almost everywhere. Arveson's hyperrigidity conjecture essentially asserts the converse. Note that in the ``atomic" situation where the integral is in fact a direct sum, this converse does indeed hold \cite[Proposition 4.4]{arveson2011}.

Finally, we note that we choose not to make separability of our operator systems a blanket assumption, although such conditions will occasionally make an appearance for technical reasons throughout.

\section{Characterizing hyperrigidity via states}\label{S:charhyper}

In this section, we make partial progress towards verifying the hyperrigidity conjecture and provide several different characterizations of hyperrigidity using states. 

Before proceeding, we make an observation that will be used numerous times throughout. Let $\fS$ be an operator system and let $\fA=\rC^*(\fS)$. Let $\pi:\fA\to B(\H)$ be a unital $*$-representation and let $\Pi:\fA\to B(\H)$ be a unital completely positive extension of $\pi|_\fS$. Then, we have
\begin{equation}\label{Eq:inclusion}
\pi(\fA)=\rC^*(\pi(\fS))=\rC^*(\Pi(\fS))\subset \rC^*(\Pi(\fA)).
\end{equation}

The basic tool of this section is the following. 

\begin{lemma}\label{L:pure}
Let $\fS$ be an operator system and let $\fA=\rC^*(\fS)$. Assume that every irreducible $*$-representation of $\fA$ is a boundary representation for $\fS$. Let $\pi:\fA\to B(\H)$ be a unital $*$-representation and let $\Pi:\fA\to B(\H)$ be a unital completely positive extension of $\pi|_\fS$. Then, we have that $\psi\circ \Pi=\psi\circ\pi$
whenever $\psi$ is a unital completely positive map on $ \rC^*(\Pi(\fA))$ with the property that $\psi|_{\pi(\fA)}$ is pure.
\end{lemma}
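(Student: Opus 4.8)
The plan is to reduce everything to the boundary representation hypothesis by passing to Stinespring dilations and compressing twice. Write $\fC=\rC^*(\Pi(\fA))$, and recall from \eqref{Eq:inclusion} that $\pi(\fA)$ is a unital $\rC^*$-subalgebra of $\fC$, so that both $\rho:=\psi|_{\pi(\fA)}$ and $\psi\circ\pi$ make sense. Since $\rho$ is pure, its minimal Stinespring representation $\sigma_\rho$ is irreducible by \cite[Corollary 1.4.3]{arveson1969}. The first observation is that $\sigma:=\sigma_\rho\circ\pi:\fA\to B(\fH_\rho)$ is then an \emph{irreducible} $*$-representation of $\fA$: it is a $*$-representation as a composition of $*$-homomorphisms, and its image $\sigma_\rho(\pi(\fA))$ has trivial commutant because $\sigma_\rho$ is irreducible. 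By hypothesis $\sigma$ is therefore a boundary representation for $\fS$, that is, it enjoys the unique extension property with respect to $\fS$.

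Next I would connect the dilation of the full map $\psi$ to that of $\rho$. Applying Lemma \ref{L:stinespring} with the subalgebra $\pi(\fA)\subset\fC$ produces an isometry $W:\fH_\rho\to\fH_\psi$ satisfying $WV_\rho=V_\psi$ and $\sigma_\psi(b)W=W\sigma_\rho(b)$ for all $b\in\pi(\fA)$. Taking $b=\pi(a)$ and using $W^*W=I$, this yields the identity $W^*\sigma_\psi(\pi(a))W=\sigma(a)$ for every $a\in\fA$.

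The crux is then to recognize the compression of $\sigma_\psi\circ\Pi$ as a competing extension. Define $\Theta:\fA\to B(\fH_\rho)$ by $\Theta(a)=W^*\sigma_\psi(\Pi(a))W$. As a compression by an isometry of the unital completely positive map $\sigma_\psi\circ\Pi$, it is itself unital completely positive. Because $\Pi$ agrees with $\pi$ on $\fS$, the identity above gives $\Theta(s)=W^*\sigma_\psi(\pi(s))W=\sigma(s)$ for $s\in\fS$; that is, $\Theta$ is a unital completely positive extension of $\sigma|_\fS$ to $\fA$ into the \emph{same} space $B(\fH_\rho)$ on which $\sigma$ acts. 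Invoking the unique extension property of the boundary representation $\sigma$ forces $\Theta=\sigma$. Compressing this equality by $V_\rho$ and using $V_\psi=WV_\rho$ gives, for every $a\in\fA$,
\[
\psi(\Pi(a))=V_\psi^*\sigma_\psi(\Pi(a))V_\psi=V_\rho^*\Theta(a)V_\rho=V_\rho^*\sigma(a)V_\rho=V_\psi^*\sigma_\psi(\pi(a))V_\psi=\psi(\pi(a)),
\]
which is the desired conclusion $\psi\circ\Pi=\psi\circ\pi$.

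The step I expect to be load-bearing is the invocation of the unique extension property: everything hinges on manufacturing, from $\Pi$, a genuine unital completely positive self-map extending $\sigma|_\fS$ on the very Hilbert space $\fH_\rho$ carrying the irreducible representation $\sigma$, so that the boundary representation hypothesis applies verbatim. Purity of $\psi|_{\pi(\fA)}$ is exactly what renders $\sigma$ irreducible and hence eligible; without it one could only produce a representation that is a direct integral of irreducibles, where the analogous rigidity is precisely the open part of the hyperrigidity conjecture. The remaining verifications—that $\Theta$ is completely positive and that the two compressions collapse to $V_\rho^*\sigma(\cdot)V_\rho$—are routine given Lemma \ref{L:stinespring}.
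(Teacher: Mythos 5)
Your proof is correct and follows essentially the same route as the paper's: pass to the Stinespring dilations of $\psi$ and of its pure restriction, intertwine them via Lemma \ref{L:stinespring}, observe that purity makes $\sigma_\rho\circ\pi$ an irreducible (hence boundary) representation, apply its unique extension property to the compression $W^*\sigma_\psi(\Pi(\cdot))W$, and compress back. The only differences are notational ($\rho,\sigma,\Theta$ in place of the paper's $\phi$, $\sigma_\phi\circ\pi$, and the unnamed map $a\mapsto W^*\sigma_\psi(\Pi(a))W$).
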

\begin{proof}
Recall that $\pi(\fA)\subset \rC^*(\Pi(\fA))$ by (\ref{Eq:inclusion}). Let $\phi=\psi|_{\pi(\fA)}$ which is pure by assumption. Let $(\sigma_\psi,\fH_\psi, V_\psi)$ and $(\sigma_\phi,\fH_\phi,V_\phi)$ denote the Stinespring representations for $\psi$ and $\phi$ respectively. By Lemma \ref{L:stinespring}, we see that there is an isometry $W:\fH_\phi\to \fH_\psi$ with the property that $WV_\phi=V_\psi$ and
\[
W^*\sigma_\psi(\pi(a))W=\sigma_\phi(\pi(a))
\]
for every $a\in \fA$. 
Since $\pi$ and $\Pi$ agree on $\fS$, we see that the map
\[
a\mapsto W^*\sigma_\psi(\Pi(a))W, \quad  a\in \fA
\]
is a unital completely positive extension of $\sigma_\phi \circ \pi|_{\fS}$. Because $\phi$ is pure, we infer that $\sigma_\phi$ is irreducible. In particular, $\sigma_\phi\circ \pi$ is an irreducible $*$-representation of $\fA$, and thus is a boundary representation for $\fS$.  We conclude that 
\[
W^*\sigma_\psi(\Pi(a))W=\sigma_\phi(\pi(a))
\]
for every $a\in \fA$. Hence, using that $WV_\phi=V_\psi$ we obtain
\begin{align*}
\psi(\Pi(a))&=V^*_\psi \sigma_\psi(\Pi(a))V_\psi\\
&=V^*_\phi W^*\sigma_\psi(\Pi(a))WV_\phi\\
&=V_\phi^*\sigma_\phi(\pi(a))V_\phi\\
&=\phi(\pi(a))=\psi(\pi(a))
\end{align*}
for every $a\in \fA$, and therefore $\psi\circ \Pi=\psi\circ \pi$.
\end{proof}

Our next task is to reformulate Lemma \ref{L:pure} in a language that is conveniently applicable to our purposes in the paper. Let $\fA$ be a unital $\rC^*$-algebra and let $\fS\subset \fA$ be a self-adjoint subspace. Let $\F$ be a collection of states on $\fA$. We say that the states in $\F$ \emph{separate $\fS$} if for every non-zero self-adjoint element $s\in \fS$ we have that
\[
\sup_{\psi\in \F}|\psi(s)|>0.
\]

\begin{theorem}\label{T:purerestriction}
Let $\fS$ be an operator system and let $\fA=\rC^*(\fS)$. Assume that every irreducible $*$-representation of $\fA$ is a boundary representation for $\fS$. Let $\pi:\fA\to B(\H)$ be a unital $*$-representation and let $\Pi:\fA\to B(\H)$ be a unital completely positive extension of $\pi|_\fS$. The following statements are equivalent.
\begin{enumerate}

\item[\rm{(i)}] We have $\pi=\Pi$.

\item[\rm{(ii)}] Every pure state on $\rC^*(\Pi(\fA))$ restricts to a pure state on $\pi(\fA)$.

\item[\rm{(iii)}] There is a family of states on $\rC^*(\Pi(\fA))$ which separate \\$(\Pi-\pi)(\fA)$ and  restrict to pure states on $\pi(\fA)$.
\end{enumerate}
\end{theorem}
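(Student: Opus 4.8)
=== PROOF PROPOSAL ===

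The plan is to prove the cyclic chain of implications $\text{(i)} \Rightarrow \text{(ii)} \Rightarrow \text{(iii)} \Rightarrow \text{(i)}$. The first implication is essentially immediate: if $\pi = \Pi$, then $\rC^*(\Pi(\fA)) = \pi(\fA)$ by (\ref{Eq:inclusion}), so any pure state on $\rC^*(\Pi(\fA))$ is trivially a pure state on $\pi(\fA)$, and (ii) holds. The implication $\text{(ii)} \Rightarrow \text{(iii)}$ is also routine, since a family separating a subspace always exists by the Hahn-Banach theorem (the states on a $\rC^*$-algebra separate its self-adjoint elements), and by (ii) every pure state already restricts purely to $\pi(\fA)$; one then passes to pure states via Choquet/Krein-Milman since the separating family can be taken to consist of pure states. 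So the genuine content, and the step I expect to be the main obstacle, is $\text{(iii)} \Rightarrow \text{(i)}$.

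For the key implication, I would argue as follows. Assume there is a family $\F$ of states on $\rC^*(\Pi(\fA))$ that separates $(\Pi - \pi)(\fA)$ and whose members restrict to pure states on $\pi(\fA)$. Fix such a $\psi \in \F$ and set $\phi = \psi|_{\pi(\fA)}$, which is pure. The crucial observation is that Lemma \ref{L:pure} applies directly here: its hypothesis is exactly that $\psi$ is a unital completely positive map (in particular a state) on $\rC^*(\Pi(\fA))$ whose restriction to $\pi(\fA)$ is pure. Therefore Lemma \ref{L:pure} yields $\psi \circ \Pi = \psi \circ \pi$, i.e.\ $\psi((\Pi - \pi)(a)) = 0$ for every $a \in \fA$.

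Since this holds for every $\psi \in \F$, the family $\F$ annihilates the self-adjoint subspace $(\Pi - \pi)(\fA)$. But $\F$ was assumed to separate $(\Pi - \pi)(\fA)$: for any non-zero self-adjoint element $s \in (\Pi - \pi)(\fA)$ we would have $\sup_{\psi \in \F} |\psi(s)| > 0$, contradicting $\psi(s) = 0$ for all $\psi \in \F$. Hence $(\Pi - \pi)(\fA)$ contains no non-zero self-adjoint element. Since $(\Pi - \pi)(\fA)$ is self-adjoint (both $\Pi$ and $\pi$ are $*$-preserving on the self-adjoint parts, as unital completely positive maps preserve adjoints), every element decomposes into real and imaginary self-adjoint parts, each of which must vanish. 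Therefore $(\Pi - \pi)(\fA) = \{0\}$, which is precisely $\Pi = \pi$, establishing (i).

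The main obstacle, as noted, is recognizing that (iii) is already strong enough to invoke Lemma \ref{L:pure} without any further work: the separating family is only needed at the very end to convert the pointwise vanishing $\psi \circ \Pi = \psi \circ \pi$ into the global conclusion $\Pi = \pi$. The one point requiring mild care is confirming that $(\Pi - \pi)(\fA)$ is genuinely self-adjoint so that separation of self-adjoint elements suffices to force the whole subspace to vanish; this follows because unital completely positive maps are $*$-linear.
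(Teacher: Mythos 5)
Your proof is correct and follows essentially the same route as the paper: the same cyclic chain (i) $\Rightarrow$ (ii) $\Rightarrow$ (iii) $\Rightarrow$ (i), with the key step being exactly the paper's argument of applying Lemma \ref{L:pure} to each member of the separating family and then using separation of the self-adjoint subspace $(\Pi-\pi)(\fA)$ to conclude $\Pi=\pi$. The only difference is that you spell out details the paper leaves implicit (self-adjointness of $(\Pi-\pi)(\fA)$ and the real/imaginary decomposition), which is fine.
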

\begin{proof}
If $\pi=\Pi$, then $\rC^*(\Pi(\fA))=\pi(\fA)$ so that (i) implies (ii). It is trivial that (ii) implies (iii) since $(\Pi-\pi)(\fA)\subset \rC^*(\Pi(\fA))$ by (\ref{Eq:inclusion}). Finally, assume that there is a family $\F$ of states on $\rC^*(\Pi(\fA))$ which separate $(\Pi-\pi)(\fA)$ and  restrict to pure states on $\pi(\fA)$. To establish $\Pi=\pi$, it suffices to show that 
\[
\sup_{\psi\in \F}|\psi(\Pi(a)-\pi(a))|=0
\]
for every self-adjoint element $a\in \fA$. This follows from an application of Lemma \ref{L:pure}. We conclude that (iii) implies (i).
\end{proof}

In view of the previous statement, we note in passing that it is generally not true that if every state on a unital $\rC^*$-algebra $\fA$ restricts to be pure on a unital $\rC^*$-subalgebra $\fB$, then $\fB=\fA$. Indeed, simply consider the trivial case of $\fB=\bC I$.

We extract an easy consequence related to hyperrigidity.

\begin{corollary}\label{C:equivinclusion}
Let $\fS$ be an operator system and let $\fA=\rC^*(\fS)$. Assume that every irreducible $*$-representation of $\fA$ is a boundary representation for $\fS$. Let $\pi:\fA\to B(\H)$ be a unital $*$-representation and let $\Pi:\fA\to B(\H)$ be a unital completely positive extension of $\pi|_\fS$. Then, $\pi=\Pi$ if and only if $\Pi(\fA)\subset \pi(\fA)$.
\end{corollary}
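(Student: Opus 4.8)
The goal is Corollary \ref{C:equivinclusion}: under the standing hypotheses, $\pi = \Pi$ if and only if $\Pi(\fA) \subset \pi(\fA)$. One direction is immediate, so let me sketch both.

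\medskip

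The plan is to deduce this quickly from Theorem \ref{T:purerestriction}, whose equivalences are already available. The forward implication is trivial: if $\pi = \Pi$, then certainly $\Pi(\fA) = \pi(\fA) \subset \pi(\fA)$. For the substantive direction, I would assume the inclusion $\Pi(\fA) \subset \pi(\fA)$ and aim to verify condition (ii) of Theorem \ref{T:purerestriction}, namely that every pure state on $\rC^*(\Pi(\fA))$ restricts to a pure state on $\pi(\fA)$; the theorem then yields $\pi = \Pi$ immediately. The first step is to observe that the hypothesis $\Pi(\fA) \subset \pi(\fA)$, combined with the general inclusion (\ref{Eq:inclusion}) that gives $\pi(\fA) \subset \rC^*(\Pi(\fA))$, forces the two $\rC^*$-algebras to coincide: since $\Pi(\fA) \subset \pi(\fA)$ and $\pi(\fA)$ is a $\rC^*$-algebra, we get $\rC^*(\Pi(\fA)) \subset \pi(\fA)$, and the reverse inclusion is (\ref{Eq:inclusion}). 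Hence $\rC^*(\Pi(\fA)) = \pi(\fA)$.

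\medskip

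With this identification in hand, the remaining step is essentially tautological: a pure state $\psi$ on $\rC^*(\Pi(\fA)) = \pi(\fA)$ restricted to the subalgebra $\pi(\fA)$ is just $\psi$ itself, which is pure. Thus condition (ii) of Theorem \ref{T:purerestriction} holds vacuously, and that theorem delivers $\pi = \Pi$. The entire argument reduces to the short observation that the containment $\Pi(\fA) \subset \pi(\fA)$ collapses the ambient $\rC^*$-algebra $\rC^*(\Pi(\fA))$ onto $\pi(\fA)$, after which the pure-restriction criterion is trivially met.

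\medskip

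I do not anticipate a genuine obstacle here, since the heavy lifting has already been done in Theorem \ref{T:purerestriction} via Lemma \ref{L:pure}. The only point requiring a moment's care is the $\rC^*$-algebra identification in the first step: one must use that $\pi(\fA)$ is norm-closed and $*$-closed (being the image of a $*$-representation) so that $\Pi(\fA) \subset \pi(\fA)$ propagates to $\rC^*(\Pi(\fA)) \subset \pi(\fA)$. Once that is noted, everything else is formal, and the corollary follows.
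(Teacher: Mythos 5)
Your proposal is correct and follows essentially the same route as the paper: both arguments reduce the inclusion $\Pi(\fA)\subset\pi(\fA)$ to the identification $\rC^*(\Pi(\fA))=\pi(\fA)$ (the paper via the chain $\pi(\fS)=\Pi(\fS)\subset\Pi(\fA)\subset\pi(\fA)$, you via (\ref{Eq:inclusion}) together with the fact that $\pi(\fA)$ is a $\rC^*$-algebra), after which condition (ii) of Theorem \ref{T:purerestriction} holds trivially and yields $\pi=\Pi$.
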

\begin{proof}
Assume that $\Pi(\fA)\subset \pi(\fA)$. Then, we have 
\[
\pi(\fS)=\Pi(\fS)\subset \Pi(\fA)\subset \pi(\fA)
\]
which implies that 
\[
\pi(\fA)=\rC^*(\pi(\fS))=\rC^*(\Pi(\fA)).
\]
Thus, the pure states on $\pi(\fA)$ coincide with those on $\rC^*(\Pi(\fA))$, and Theorem \ref{T:purerestriction} implies that $\pi=\Pi$. The converse is trivial. 
\end{proof}

In light of Theorem \ref{T:purerestriction}, it behooves us to understand the states on a unital $\rC^*$-algebra $\fA$ which restrict to be pure on  a unital $\rC^*$-subalgebra $\fB$. Fixing a state $\psi$ on $\fA$ and allowing $\fB$ to vary (while still being non-trivial), it is sometimes possible to arrange for the restriction $\psi|_\fB$ to be pure as well; see \cite{hamhalter2002} and references therein. Typically however, one does not expect purity of the restriction, as easy examples show.

\begin{example}\label{E:notpurerestriction}
Let $\bM_2$ be the complex $2\times 2$ matrices and let $\{e_1,e_2\}$ be the canonical orthonormal basis of $\bC^2$. Choose non-zero complex numbers $\gamma_1,\gamma_2$ such that $|\gamma_1|^2+|\gamma_2|^2=1$ and put
\[
\xi=\gamma_1 e_1+\gamma_2 e_2 .
\]
Define a state $\omega$ on $\bM_2$ as
\[
\omega(a)=\langle a\xi,\xi\rangle, \quad a\in \bM_2.
\]
The GNS representation of $\omega$ is seen to be unitarily equivalent to the identity representation on $\bM_2$, which is irreducible. Thus, $\omega$ is pure. Note however that the restriction of $\omega$ to the commutative $\rC^*$-subalgebra $\bC\oplus \bC\subset \bM_2$ is not multiplicative since both $\gamma_1$ and $\gamma_2$ are non-zero, and therefore the restriction is not pure.
\qed
\end{example}

 Nevertheless, the insight provided by Theorem \ref{T:purerestriction} will guide us throughout the paper, and it already contains non-trivial information regarding the hyperrigidity conjecture as we proceed to show next. First, we need a technical tool.

\begin{lemma}\label{L:nosp}
Let $\fS$ be an operator system and let $\fA=\rC^*(\fS)$. Assume that every irreducible $*$-representation of $\fA$ is a boundary representation for $\fS$. Let $\pi:\fA\to B(\H)$ be a unital $*$-representation and let $\Pi:\fA\to B(\H)$ be a unital completely positive extension of $\pi|_\fS$. Fix a state $\phi$ on $\pi(\fA)$ and an element $a\in \fA$ such that $\pi(a)-\Pi(a)$ is self-adjoint. Then, we have that
\[
\sup\{ \phi(\pi(c)):c\in \fA, \pi(c)\leq \pi(a)-\Pi(a)\}\leq 0.
\]
\end{lemma}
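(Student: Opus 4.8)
The plan is to prove the pure case first, which is where the boundary representation hypothesis does all of the work (via Lemma \ref{L:pure}), and then to pass to an arbitrary state by an integration argument.

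Fix a self-adjoint element $c\in\fA$ with $\pi(c)\leq \pi(a)-\Pi(a)$; it suffices to bound $\phi(\pi(c))$ for each such $c$. I would first treat a \emph{pure} state $\omega$ on $\pi(\fA)$. Since $\pi(\fA)\subset \rC^*(\Pi(\fA))$ is a unital $\rC^*$-subalgebra, Hahn--Banach for states provides a state $\psi$ on $\rC^*(\Pi(\fA))$ with $\psi|_{\pi(\fA)}=\omega$. The crucial point is that this restriction is pure, so Lemma \ref{L:pure} applies to $\psi$ and gives $\psi\circ\Pi=\psi\circ\pi$; in particular $\psi(\Pi(a))=\psi(\pi(a))$. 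Positivity of $\psi$ together with $\pi(c)\leq \pi(a)-\Pi(a)$ then yields
\[
\omega(\pi(c))=\psi(\pi(c))\leq \psi(\pi(a)-\Pi(a))=\psi(\pi(a))-\psi(\Pi(a))=0.
\]
Hence $\omega(\pi(c))\leq 0$ for every pure state $\omega$ on $\pi(\fA)$ and every admissible $c$.

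To reach the given (possibly non-pure) state $\phi$, I would invoke the Choquet integral representation of Theorem \ref{T:choquet} to write $\phi=\int_{\S_p(\pi(\fA))}\omega\,d\mu(\omega)$ for a suitable probability measure $\mu$ concentrated on the pure states. For fixed admissible $c$, the evaluation $\omega\mapsto \omega(\pi(c))$ is affine and weak-$*$ continuous, so integrating the inequality from the previous step gives
\[
\phi(\pi(c))=\int_{\S_p(\pi(\fA))}\omega(\pi(c))\,d\mu(\omega)\leq 0.
\]
Taking the supremum over all admissible $c$ then produces the desired conclusion.

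The main obstacle is exactly this passage from pure states to a general state: Lemma \ref{L:pure} only delivers the identity $\psi\circ\Pi=\psi\circ\pi$ when the restriction to $\pi(\fA)$ is pure, so the hypothesis that irreducible representations are boundary representations cannot be exploited for $\phi$ directly, and the Choquet decomposition is what bridges the gap. This in turn requires metrizability of $\S(\pi(\fA))$, i.e.\ separability of $\pi(\fA)$; I would either restrict to the separable regime (note that $\fA=\rC^*(\fS)$ is separable whenever $\fS$ is) or, in general, replace Theorem \ref{T:choquet} by a maximal-measure version of Choquet--Bishop--de Leeuw. A secondary and entirely routine point is verifying that the extension $\psi$ of a pure state $\omega$ restricts back to $\omega$ on $\pi(\fA)$, which is immediate from the construction.
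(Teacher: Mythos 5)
Your handling of the pure case is exactly the paper's: extend a pure state $\omega$ on $\pi(\fA)$ to a state $\psi$ on $\rC^*(\Pi(\fA))$, observe that the restriction is pure, apply Lemma \ref{L:pure} to get $\psi(\pi(a)-\Pi(a))=0$, and conclude $\omega(\pi(c))\leq 0$ for every admissible $c$. Where you diverge is the passage to the general state $\phi$: you integrate against a Choquet measure via Theorem \ref{T:choquet}, whereas the paper simply invokes Krein--Milman. For fixed $c$, the functional $\omega\mapsto\omega(\pi(c))$ is weak-$*$ continuous and affine, and it is $\leq 0$ on $\S_p(\pi(\fA))$; hence it is $\leq 0$ on convex combinations of pure states and, by continuity, on their weak-$*$ closed convex hull, which by Krein--Milman is all of $\S(\pi(\fA))$. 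This difference matters: the lemma carries no separability hypothesis, and Theorem \ref{T:choquet} requires one, so your argument as written covers only separable $\fS$. Your parenthetical remark that $\rC^*(\fS)$ is separable whenever $\fS$ is does not repair this, since $\fS$ itself is not assumed separable here --- note that the paper imposes separability only later (Theorem \ref{T:unpstate}, Corollary \ref{C:unphyper}), precisely where Choquet machinery is genuinely unavoidable. Your proposed fallback via a maximal-measure Choquet--Bishop--de Leeuw theorem can be made rigorous (one checks that $\{\omega:\omega(\pi(c))>0\}$ is a Baire set disjoint from the extreme points, so any maximal representing measure assigns it mass zero), but it is substantially heavier than the situation demands: because the quantity being estimated is an affine continuous function of the state, no integral representation is needed at all, and Krein--Milman alone closes the argument in full generality.
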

\begin{proof}
By (\ref{Eq:inclusion}), we have $\pi(\fA)\subset \rC^*(\Pi(\fA))$. Put $x=\pi(a)-\Pi(a) \in \rC^*(\Pi(\fA))$. We infer from Lemma \ref{L:pure} that $\psi(x)=0$ for every state $\psi$ on $\rC^*(\Pi(\fA))$ such that $\psi|_{\pi(\fA)}$ is pure. In particular, if $c\in \fA$ satisfies $\pi(c)\leq x$ and $\omega$ is a pure state on $\pi(\fA)$, then we see that 
\[
\omega(\pi(c))=\psi(\pi(c))\leq \psi(x)=0
\]
for every state $\psi$ on $\rC^*(\Pi(\fA))$ such that $\psi|_{\pi(\fA)}=\omega$. By the Krein-Milman theorem, the state $\phi$ lies in the weak-$*$ closure of the convex hull of $\S_p(\pi(\fA))$, and thus
\[
\sup\{ \phi(\pi(c)):c\in \fA,\pi(c)\leq x\}\leq 0.
\]
 \end{proof}

Let $\fS$ be an operator system and let $\fA=\rC^*(\fS)$. We assume that every irreducible $*$-representation of $\fA$ is a boundary representation for $\fS$. Further, let $\pi:\fA\to B(\H)$ be a unital $*$-representation and let $\Pi:\fA\to B(\H)$ be a unital completely positive map which agrees with $\pi$ on $\fS$. If the hyperrigidity conjecture holds, then we would have $\pi=\Pi$. In other words, the self-adjoint subspace $(\pi-\Pi)(\fA)$ would be trivial. The next development, which is one of the main result of this section, establishes that this subspace cannot contain any strictly positive element, thus supporting Arveson's conjecture.

\begin{theorem}\label{T:nosp}
Let $\fS$ be an operator system and let $\fA=\rC^*(\fS)$. Assume that every irreducible $*$-representation of $\fA$ is a boundary representation for $\fS$. Let $\pi:\fA\to B(\H)$ be a unital $*$-representation and let $\Pi:\fA\to B(\H)$ be a unital completely positive extension of $\pi|_\fS$.  Then, the subspace $(\pi-\Pi)(\fA)$ contains no strictly positive element.
\end{theorem}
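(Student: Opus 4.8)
The plan is to argue by contradiction and read off the conclusion directly from Lemma~\ref{L:nosp}. Suppose, toward a contradiction, that $(\pi-\Pi)(\fA)$ contains a strictly positive element; that is, there is $a\in\fA$ for which $x:=\pi(a)-\Pi(a)$ is a positive invertible operator. Using that the state space of the ambient unital $\rC^*$-algebra is weak-$*$ compact (so that the bottom of the spectrum of the self-adjoint $x$ is the minimum of $\omega(x)$ over states $\omega$), strict positivity is equivalent to the uniform lower bound $x\geq \delta I$ for some $\delta>0$. This is the only mildly technical translation in the argument, and I would record it at the outset.

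Next I would apply Lemma~\ref{L:nosp}. Since $x=\pi(a)-\Pi(a)$ is in particular self-adjoint, the lemma applies to this very $a$: for every state $\phi$ on $\pi(\fA)$ we have $\sup\{\phi(\pi(c)):c\in\fA,\ \pi(c)\leq x\}\leq 0$. The key move is then to feed a scalar multiple of the unit into this supremum. Because $\fA$ is unital, the element $c:=\delta\,1_{\fA}$ belongs to $\fA$, and $\pi(c)=\delta I\leq x$, so $c$ is admissible in the set defining the supremum. However $\phi(\pi(c))=\delta\,\phi(I)=\delta>0$ for any state $\phi$, which forces the supremum to be at least $\delta>0$. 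This contradicts the bound $\leq 0$ furnished by Lemma~\ref{L:nosp}, so no strictly positive element can lie in $(\pi-\Pi)(\fA)$.

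The genuine content of the statement is thus entirely packaged inside Lemma~\ref{L:nosp}, and through it inside Lemma~\ref{L:pure} together with the hypothesis that every irreducible representation of $\fA$ is a boundary representation for $\fS$ (the purity/Krein--Milman reduction). Given that lemma, there is no remaining obstacle: the whole argument reduces to the observation that a uniform lower bound $x\geq\delta I$ is detected by testing against the constant element $\delta\,1_{\fA}$, which immediately violates the one-sided estimate. If one prefers to avoid invoking strict positivity as invertibility, the same contradiction can be obtained by evaluating at a single fixed state and noting $\phi(x)\geq\delta$; I would mention this as an equivalent phrasing but keep the invertibility formulation as the cleanest route.
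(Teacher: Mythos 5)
Your proof is correct and is essentially the paper's own argument: the paper likewise takes $x=\pi(a)-\Pi(a)\geq\delta I$, tests the supremum in Lemma~\ref{L:nosp} against the element $\delta I\in\fA$, and derives the contradiction $\sup\{\phi(\pi(c)):c\in\fA,\ \pi(c)\leq x\}\geq\delta>0$. The only difference is your (correct, but unnecessary) elaboration of why strict positivity gives the lower bound $x\geq\delta I$, which the paper takes as the definition.
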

\begin{proof}
Let $a\in \fA$ and assume that the element $x=\pi(a)-\Pi(a)$ is strictly positive, so that $x\geq \delta I$ for some $\delta>0$. We infer that 
 \[
\sup\{ \phi(\pi(c)):c\in \fA,\pi(c)\leq x\}\geq \phi(\pi(\delta I))=\delta
\]
for every state $\phi$ on $\fA$, which contradicts Lemma \ref{L:nosp}.
\end{proof}

Until now, the underlying theme of this section has been the purity of restrictions of states to $\rC^*$-subalgebras. The dual process of \emph{extending} states from a $\rC^*$-algebra to a larger one is also relevant for hyperrigidity, and we explore this idea next. We start by clarifying the relation between the unique extension property for states and the corresponding property for $*$-representations.

\begin{theorem}\label{T:srimpliesrr}
Let $\fA$ be a unital $\rC^*$-algebra and let $\fS\subset \fA$ be an operator system. The following statements hold.
\begin{enumerate}

\item[\rm{(1)}] Assume that every pure state on $\fA$ has the unique extension property with respect to $\fS$. Then, every irreducible $*$-representa\-tion of $\fA$ has the unique extension property with respect to $\fS$.

\item[\rm{(2)}] Assume that every state on $\fA$ has the unique extension property with respect to $\fS$. Then, every unital $*$-representation of $\fA$ has the unique extension property with respect to $\fS$. In particular, in the case where $\fA=\rC^*(\fS)$ we conclude that $\fS$ is hyperrigid.
\end{enumerate}
\end{theorem}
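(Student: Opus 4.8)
The plan is to prove both statements by a single device, reducing the unique extension property for (possibly higher-dimensional) representations to the \emph{scalar-valued} unique extension property for vector states, and then recovering operator equality through the polarization identity. Fix a unital $*$-representation $\pi\colon\fA\to B(\H_\pi)$ and let $\Pi\colon\fA\to B(\H_\pi)$ be an arbitrary unital completely positive extension of $\pi|_\fS$; the goal in both parts is to show that $\Pi=\pi$.

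For each unit vector $\xi\in\H_\pi$, I would introduce the two functionals $\omega_\xi(a)=\langle\pi(a)\xi,\xi\rangle$ and $\psi_\xi(a)=\langle\Pi(a)\xi,\xi\rangle$ for $a\in\fA$. Both are states on $\fA$: they are unital because $\pi$ and $\Pi$ are, and positive because $\pi$ and $\Pi$ are. Since $\Pi$ and $\pi$ agree on $\fS$, the states $\psi_\xi$ and $\omega_\xi$ have the same restriction to $\fS$, so that $\psi_\xi$ is a state on $\fA$ extending $\omega_\xi|_\fS$, and a fortiori a completely positive extension of it (with target $\bC$). Thus, as soon as $\omega_\xi$ is known to enjoy the unique extension property with respect to $\fS$, we may conclude $\psi_\xi=\omega_\xi$, that is, $\langle\Pi(a)\xi,\xi\rangle=\langle\pi(a)\xi,\xi\rangle$ for every $a\in\fA$.

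The two parts differ only in how the unique extension property is invoked for $\omega_\xi$. In part (2), where every state on $\fA$ is assumed to have the property, there is nothing further to check. In part (1), I would first observe that when $\pi$ is irreducible, every unit vector $\xi$ is cyclic, since $\overline{\pi(\fA)\xi}$ is a nonzero closed invariant subspace and hence all of $\H_\pi$; consequently the GNS representation of $\omega_\xi$ is unitarily equivalent to $\pi$ and is therefore irreducible, so $\omega_\xi$ is pure and the pure-state hypothesis applies. In either case we obtain $\langle\Pi(a)\xi,\xi\rangle=\langle\pi(a)\xi,\xi\rangle$ for all $a\in\fA$ and all unit vectors $\xi$. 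Homogeneity extends this to all $\xi\in\H_\pi$, and the polarization identity then forces $\Pi(a)=\pi(a)$ for every $a$, i.e. $\Pi=\pi$. This is exactly the unique extension property for $\pi$ with respect to $\fS$. Finally, when $\fA=\rC^*(\fS)$, having shown in part (2) that every unital $*$-representation of $\fA$ has the unique extension property with respect to $\fS$, Theorem \ref{T:arvhyperequiv} immediately yields that $\fS$ is hyperrigid.

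There is no substantial obstacle in this argument, which is essentially a pointwise reduction to the hypotheses. The one point that genuinely requires care is the purity claim in part (1)---namely that vector states of an irreducible representation are pure, via the cyclicity-plus-GNS observation---together with the standard polarization step upgrading equality of all diagonal quadratic forms to equality of the operators themselves.
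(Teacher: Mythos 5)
Your proof is correct and takes essentially the same approach as the paper: reduce to vector states, invoke the unique extension hypothesis (using cyclicity and the GNS construction to establish purity of vector states in the irreducible case), and then upgrade equality of the quadratic forms $\langle\Pi(a)\xi,\xi\rangle=\langle\pi(a)\xi,\xi\rangle$ to the operator equality $\Pi=\pi$. The only cosmetic difference is the final step, where the paper argues that the numerical radius of $\pi(a)-\Pi(a)$ vanishes while you use the polarization identity; on a complex Hilbert space these are the same standard fact.
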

\begin{proof}
The two statements are established via near identical arguments, so we intertwine their proofs.
 Let $\pi:\fA\to B(\H)$ be a unital $*$-representation (irreducible in the case of (1)) and let $\Pi:\fA\to B(\H)$ be a unital completely positive map which agrees with $\pi$ on $\fS$. We must show that $\pi=\Pi$ on $\fA$, for which it is sufficient to establish that 
\[
\langle \pi(a) \zeta,\zeta\rangle=\langle \Pi(a) \zeta,\zeta\rangle
\]
for every unit vector $\zeta\in \H$ and every $a\in \fA$. Indeed, in that case, for each $a\in \fA$ the numerical radius of $\pi(a)-\Pi(a)\in B(\H)$ is $0$, and thus $\pi(a)=\Pi(a)$. 

Fix henceforth a unit vector $\zeta\in \H$ and consider the state $\chi$ on $\fA$ defined as
\[
\chi(a)=\langle \pi(a)\zeta,\zeta \rangle, \quad a\in \fA.
\]
If $\pi$ is irreducible, it is routine to verify that the GNS representation of $\chi$ is unitarily equivalent to $\pi$, whence $\chi$ must be pure in this case. Consider now another state $\psi$ on $\fA$ defined as
\[
\psi(a)=\langle \Pi(a)\zeta,\zeta \rangle, \quad a\in \fA.
\]
We see that $\psi$ and $\chi$ agree on $\fS$, whence they agree on $\fA$ by assumption. In other words,
\[
\langle \pi(a) \zeta,\zeta\rangle=\langle \Pi(a) \zeta,\zeta\rangle, \quad a\in \fA
\]
and the proof is complete.
\end{proof}

In the classical case where $\rC^*(\fS)$ is commutative, the pure states coincide with the irreducible $*$-representations, whence the converse of part (1) of Theorem \ref{T:srimpliesrr} holds. For general operator systems $\fS$ however, it can happen that $\fS$ is hyperrigid while there are some pure states on $\rC^*(\fS)$ which do not have the unique extension property with respect to $\fS$. We provide an elementary example.

\begin{example}\label{E:ueprepstates}
Let $\{e_1,e_2\}$ denote the canonical orthonormal basis of $\bC^2$. Consider the associated standard matrix units $E_{12},E_{21} \in\bM_2$. Let $\fS\subset \bM_2$ be the operator system generated by $I,E_{12},E_{21}$. Then, $\bM_2=\rC^*(\fS)$.  For $1\leq k \leq 2$, we let $\chi_k$ be the vector state on $\bM_2$ defined as
\[
\chi_k(a)=\langle ae_k, e_k\rangle, \quad a\in \bM_2.
\]
We see that the GNS representations of $\chi_1$ and $\chi_2$ are unitarily equivalent to the identity representation on $\bM_2$, which is irreducible. Thus, $\chi_1$ and $\chi_2$ are both pure. Moreover, every element $s\in \fS$ has the form
\[
s=c_0 I+c_{12}E_{12}+c_{21}E_{21}
\]
for some $c_0,c_{12},c_{21}\in \bC$. We note that
\[
\chi_k(c_0 I+c_{12}E_{12}+c_{21}E_{21})=c_0, \quad 1\leq k \leq 2
\]
so that $\chi_1|_{\fS}=\chi_2|_{\fS}$ while $\chi_1\neq \chi_2$. Thus, $\chi_1$ does not have the unique extension property with respect to $\fS$.

On the other hand, it is well-known that up to unitary equivalence the only unital $*$-representations of $\bM_2$ are multiples of the identity representation. The identity representation is a boundary representation for $\fS$ by Arveson's boundary theorem \cite[Theorem 2.1.1]{arveson1972}. Since the unique extension property is preserved under direct sums \cite[Proposition 4.4]{arveson2011}, we conclude that every unital $*$-representation of $\bM_2$ has the unique extension property with respect to $\fS$.
\qed
\end{example}

Our next task is to relate the unique extension property to the pure restriction property.  For this purpose, we recall some well-known facts which follow easily from standard convexity arguments (see Subsection \ref{SS:uep} about the notation used here).

\begin{lemma}\label{L:srchar}
Let $\fS\subset \fT$ be operator systems. 
\begin{enumerate}

\item[\rm{(1)}]  Let $\chi$ be a pure state on $\fT$ which has the unique extension property with respect to $\fS$. Then,  the restriction $\chi|_{\fS}$ is pure.

\item[\rm{(2)}]  Let $\omega$ be a pure state on $\fS$. Then, $\E(\omega,\fT)$ is a weak-$*$ closed convex subset of $\S(\fT)$ whose extreme points are pure states on $\fT$. In particular, $\omega$ admits a unique state extension to $\fT$ if and only if it admits a unique pure state extension to $\fT$.  

\end{enumerate}
\end{lemma}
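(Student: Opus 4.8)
The plan is to rely throughout on the standard fact, recalled above, that a state is pure precisely when it is an extreme point of the relevant state space, together with Arveson's extension theorem, which guarantees that every state on $\fS$ extends to a state on $\fT$ (so that in particular $\E(\omega,\fT)$ is never empty).

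For part (1), I would argue that $\chi|_\fS$ is an extreme point of $\S(\fS)$. Suppose $\chi|_\fS = t\omega_1 + (1-t)\omega_2$ for some $0<t<1$ and $\omega_1,\omega_2\in\S(\fS)$. Using Arveson's extension theorem, I would lift each $\omega_i$ to a state $\psi_i\in\S(\fT)$, so that $t\psi_1+(1-t)\psi_2$ is a state on $\fT$ whose restriction to $\fS$ equals $\chi|_\fS$. The unique extension property of $\chi$ forces $t\psi_1+(1-t)\psi_2=\chi$, and since $\chi$ is pure, hence extreme in $\S(\fT)$, this yields $\psi_1=\psi_2=\chi$. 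Restricting back to $\fS$ gives $\omega_1=\omega_2=\chi|_\fS$, so $\chi|_\fS$ is extreme and therefore pure.

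For part (2), I would first observe that $\E(\omega,\fT)$ is convex (a convex combination of extensions is again an extension) and weak-$*$ closed, being the intersection of the weak-$*$ closed slices $\{\psi\in\S(\fT):\psi(s)=\omega(s)\}$ over $s\in\fS$; as a weak-$*$ closed subset of the compact set $\S(\fT)$ it is itself compact. The crux is to identify its extreme points. Given $\psi\in\ext\E(\omega,\fT)$ and a decomposition $\psi=t\psi_1+(1-t)\psi_2$ with $\psi_i\in\S(\fT)$ and $0<t<1$, restriction to $\fS$ produces $\omega=t(\psi_1|_\fS)+(1-t)(\psi_2|_\fS)$; here the purity of $\omega$ is essential, as it forces $\psi_1|_\fS=\psi_2|_\fS=\omega$, placing $\psi_1,\psi_2$ back in $\E(\omega,\fT)$. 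Extremality of $\psi$ within $\E(\omega,\fT)$ then gives $\psi_1=\psi_2=\psi$, so $\psi$ is extreme in $\S(\fT)$, i.e.\ pure. The same computation shows conversely that any pure state lying in $\E(\omega,\fT)$ is an extreme point of $\E(\omega,\fT)$, so that $\ext\E(\omega,\fT)$ coincides exactly with the set of pure state extensions of $\omega$.

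Finally, the ``in particular'' clause follows from the Krein-Milman theorem applied to the nonempty compact convex set $\E(\omega,\fT)$. Indeed, $\omega$ admits a unique state extension exactly when $\E(\omega,\fT)$ is a singleton, which by Krein-Milman happens exactly when it has a single extreme point; since the extreme points are precisely the pure state extensions, this is in turn equivalent to $\omega$ admitting a unique pure state extension. The arguments are entirely routine convexity manipulations; the only point demanding genuine care is the step in part (2) where the purity of $\omega$ is invoked to guarantee that the two halves of a decomposition remain extensions of $\omega$, which is exactly what keeps the decomposition inside $\E(\omega,\fT)$ and makes the extremality hypothesis applicable.
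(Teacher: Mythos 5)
Your proof is correct, and it matches the paper's intent exactly: the paper states this lemma without proof, describing it as a collection of ``well-known facts which follow easily from standard convexity arguments,'' and your argument is precisely that standard convexity argument carried out in full (extending the $\omega_i$ via Arveson's extension theorem in part (1), identifying $\ext\E(\omega,\fT)$ with the pure state extensions in part (2), and closing with Krein--Milman). No gaps; the one step that needs care --- using purity of $\omega$ to keep the pieces of a decomposition inside $\E(\omega,\fT)$ --- is exactly where you placed the emphasis.
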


In view of this interplay between the unique extension property for states and the pure restriction property, we give another characterization of hyperrigidity. 

%

\begin{theorem}\label{T:purestateuep}
Let $\fS$ be an operator system and let $\fA=\rC^*(\fS)$. Assume that every irreducible $*$-representation of $\fA$ is a boundary representation for $\fS$. Let $\pi:\fA\to B(\H)$ be a unital $*$-representation and let $\Pi:\fA\to B(\H)$ be a unital completely positive extension of $\pi|_\fS$. The following statements are equivalent.
\begin{enumerate}

\item[\rm{(i)}] We have $\pi=\Pi$.

\item[\rm{(ii)}] Every pure state on $\rC^*(\Pi(\fA))$ has the unique extension property with respect to $\pi(\fA)$.

\item[\rm{(iii)}] There is a family of pure states on $\rC^*(\Pi(\fA))$ which separate $(\Pi-\pi)(\fA)$ and have the unique extension property with respect to $\pi(\fA)$.
\end{enumerate}
\end{theorem}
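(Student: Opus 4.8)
The plan is to prove the chain of implications (i) $\Rightarrow$ (ii) $\Rightarrow$ (iii) $\Rightarrow$ (i), using the inclusion $\pi(\fA)\subset \rC^*(\Pi(\fA))$ from (\ref{Eq:inclusion}) throughout. The first two implications are routine. For (i) $\Rightarrow$ (ii), observe that when $\pi=\Pi$ we have $\rC^*(\Pi(\fA))=\pi(\fA)$, so that any state on $\rC^*(\Pi(\fA))$ is already a state on $\pi(\fA)$ and trivially has the unique extension property with respect to $\pi(\fA)$, there being nothing to extend. For (ii) $\Rightarrow$ (iii), I would take $\F$ to be the collection of all pure states on $\rC^*(\Pi(\fA))$: since pure states attain the norm of any self-adjoint element of a $\rC^*$-algebra, this family separates all of $\rC^*(\Pi(\fA))$, and in particular the self-adjoint subspace $(\Pi-\pi)(\fA)$, while each of its members has the unique extension property with respect to $\pi(\fA)$ by hypothesis.

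The substance of the argument lies in (iii) $\Rightarrow$ (i), and the key is to feed the unique extension property into the machinery already developed. Fix $\psi\in \F$. Since $\psi$ is a pure state on $\rC^*(\Pi(\fA))$ that has the unique extension property with respect to $\pi(\fA)$, Lemma \ref{L:srchar}(1) applies (with $\fT=\rC^*(\Pi(\fA))$ and $\fS=\pi(\fA)$) and shows that the restriction $\psi|_{\pi(\fA)}$ is pure. This is precisely the hypothesis needed to invoke Lemma \ref{L:pure}, which then yields $\psi\circ \Pi=\psi\circ \pi$; equivalently, $\psi((\Pi-\pi)(a))=0$ for every $a\in \fA$. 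Now let $a\in \fA$ be self-adjoint, so that $(\Pi-\pi)(a)$ is a self-adjoint element of $(\Pi-\pi)(\fA)$, as both $\pi$ and $\Pi$ are $*$-preserving. Since $\sup_{\psi\in \F}|\psi((\Pi-\pi)(a))|=0$ and $\F$ separates $(\Pi-\pi)(\fA)$, we must have $(\Pi-\pi)(a)=0$. As every element of $\fA$ is a linear combination of self-adjoint elements and both maps are linear, this forces $\Pi=\pi$.

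I expect no serious obstacle here: the proof is a clean composition of two previously established facts. The only point requiring care is the recognition that Lemma \ref{L:srchar}(1) is exactly the bridge converting the unique extension property of a pure state into purity of its restriction, which is the input demanded by Lemma \ref{L:pure}; once this is seen, the separation hypothesis finishes the argument just as in Theorem \ref{T:purerestriction}. Alternatively, one could bypass the explicit computation by noting that statements (ii) and (iii) here imply the respective statements (ii) and (iii) of Theorem \ref{T:purerestriction} via Lemma \ref{L:srchar}(1), and then quote that theorem directly.
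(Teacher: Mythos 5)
Your proof is correct and follows essentially the same route as the paper: the key step in both is using Lemma \ref{L:srchar}(1) to convert pure states with the unique extension property into states with pure restrictions, after which the separation hypothesis forces $\Pi=\pi$. The only cosmetic difference is that you mostly inline the argument of Theorem \ref{T:purerestriction} via Lemma \ref{L:pure}, whereas the paper simply cites Theorem \ref{T:purerestriction} --- exactly the shortcut you mention at the end.
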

\begin{proof}
If $\pi=\Pi$, then $\rC^*(\Pi(\fA))=\pi(\fA)$ so that (i) implies (ii). It is trivial that (ii) implies (iii) since $(\Pi-\pi)(\fA)\subset \rC^*(\Pi(\fA))$ by (\ref{Eq:inclusion}). Assume that there is a family of pure states on $\rC^*(\Pi(\fA))$ which separate $(\Pi-\pi)(\fA)$ and have the unique extension property with respect to $\pi(\fA)$. By Lemma \ref{L:srchar}, this family consists of states which restrict to be pure on $\pi(\fA)$. Thus, $\pi=\Pi$ by virtue of Theorem \ref{T:purerestriction}, and (iii) implies (i). 
\end{proof}

\section{The unique extension property for states}\label{S:uep}

In the previous section, we gave several different characterizations of hyperrigidity in terms of states. In particular, Theorem \ref{T:purestateuep} provides motivation to examine the unique extension property for states in greater detail. This is the task we undertake in this section. First, we remark that these uniqueness considerations for pure states  on a maximal abelian self-adjoint subalgebra of $B(\H)$ were at the heart of the famous Kadison-Singer problem \cite{KS1959} which was solved in \cite{MSS2015}. The case of general subalgebras has also been studied extensively; see \cite{archbold2001} and references therein.  

We now turn to examining the unique extension property for states which are not pure. In the separable setting, those are exactly the states which are given as the integration of a collection of pure states with respect to some probability measure (see Theorem \ref{T:choquet}). More generally, we have the following.

\begin{proposition}\label{P:uepint}
Let $\fA$ be a unital $\rC^*$-algebra and let $\fS\subset \fA$ be an operator system.  Let $\psi$ be a state on $\fA$ such that
\[
\psi=\int_{\S(\fA)}\chi d\mu(\chi)
\]
for some Borel probability measure $\mu$ on $\S(\fA)$. Assume that $\psi$ has the unique extension property with respect to $\fS$. If $\chi$ is a state on $\fA$ satisfying $\mu(\{\chi\})>0$, then $\chi$ has the unique extension property with respect to $\fS$.
\end{proposition}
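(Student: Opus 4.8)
The plan is to exploit the fact that the hypothesis $\mu(\{\chi\})>0$ exhibits $\chi$ as an atom in the barycentric decomposition of $\psi$, so that $\psi$ splits as a genuine convex combination in which $\chi$ appears with strictly positive weight. The unique extension property of $\psi$ can then be transferred to $\chi$ by a standard convexity argument: any competing extension of $\chi|_{\fS}$ would, after being re-inserted into the convex combination, produce a competing extension of $\psi|_{\fS}$, contradicting uniqueness.

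Concretely, I would set $t=\mu(\{\chi\})>0$. If $t=1$ then $\psi=\chi$ and the conclusion is immediate, so assume $t<1$. Writing $\mu=t\,\delta_\chi+(1-t)\,\nu$, where $\nu$ is the probability measure obtained by normalizing the restriction of $\mu$ to $\S(\fA)\setminus\{\chi\}$, and letting $\omega$ denote the barycenter of $\nu$ (a state on $\fA$, since $\S(\fA)$ is weak-$*$ compact and convex), the integral representation of $\psi$ yields
\[
\psi=t\,\chi+(1-t)\,\omega.
\]
I would then argue by contradiction. Suppose $\chi$ fails the unique extension property with respect to $\fS$; then $\chi|_{\fS}$ admits a completely positive extension $\chi'$ to $\fA$ with $\chi'\neq\chi$. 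Form the state $\psi'=t\,\chi'+(1-t)\,\omega$ on $\fA$. Since $\chi'$ and $\chi$ agree on $\fS$, the functionals $\psi'$ and $\psi$ also agree on $\fS$. As $\psi$ has the unique extension property with respect to $\fS$, we conclude $\psi'=\psi$, whence $t\,\chi'=t\,\chi$; because $t>0$ this forces $\chi'=\chi$, a contradiction.

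The argument is essentially routine convexity, so I do not anticipate a serious obstacle; the only points requiring a little care are verifying that the barycenter $\omega$ is a well-defined state (so that $\psi'$ is genuinely a completely positive extension to $\fA$) and that the decomposition $\psi=t\,\chi+(1-t)\,\omega$ really follows from the integral representation, both of which are standard consequences of the weak-$*$ compactness of $\S(\fA)$ and the linearity of the barycenter map. One should also recall that every state is automatically completely positive, so that exhibiting $\psi'$ as a state suffices to produce a completely positive extension in the sense demanded by the unique extension property.
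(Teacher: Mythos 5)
Your proof is correct and is essentially the paper's own argument: the paper likewise splits off the atom, replaces its contribution by an arbitrary completely positive extension $\eps_0$ of $\chi|_{\fS}$, notes that the resulting state agrees with $\psi$ on $\fS$, invokes the unique extension property of $\psi$, and cancels the common part to conclude $\eps_0=\chi$ using $\mu(\{\chi\})>0$. The only cosmetic differences are that the paper argues directly rather than by contradiction, and it keeps the unnormalized integral over $\S(\fA)\setminus\{\chi\}$ instead of introducing the normalized barycenter $\omega$, which also spares it the separate treatment of the case $\mu(\{\chi\})=1$.
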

\begin{proof}
Fix a state $\chi_0$ on $\fA$ with the property that $\mu(\{\chi_0\})>0$. Choose a state $\eps_0$ on $\fA$ which agrees with $\chi_0$ on $\fS$. Next, define a state $\phi$ on $\fA$ as
\[
\phi(a)=\mu(\{\chi_0\})\eps_{0}(a)+\int_{\S(\fA)\setminus \{\chi_0\}}\chi(a)d\mu(\chi), \quad a\in \fA.
\]
Then, we see that $\phi$ and $\psi$ agree on $\fS$, and thus by assumption we must have that $\phi=\psi$. Therefore, for each $a\in \fA$ we find that
\begin{align*}
\mu(\{\chi_0\})\eps_{0}(a)&=\phi(a)-\int_{\S(\fA)\setminus \{\chi_0\}}\chi(a)d\mu(\chi)\\
&=\psi(a)-\int_{\S(\fA)\setminus \{\chi_0\}}\chi(a)d\mu(\chi)\\
&=\mu(\{\chi_0\})\chi_0(a).
\end{align*}
Since $\mu(\{\chi_0\})>0$, we conclude that $\eps_0=\chi_0$ and the proof is complete.
\end{proof}

In particular, the previous proposition implies that if a state with the unique extension property is given as some finite convex combination of states, then all of those have the unique extension property as well. The question asking whether the condition on $\chi$ being an ``atom" of $\mu$ can be removed from the statement appears to be difficult. A related fact is known to hold at least in the case where $\fS$ is separable; see \cite[Theorem 6.1]{arveson2008}. In the setting of that paper however, states are replaced by $*$-representations and it is systematically assumed that $\fA=\rC^*(\fS)$. Under these conditions, the unique extension property is known to be equivalent to a dilation theoretic maximality property \cite[Proposition 2.4]{arveson2008}. This important characterization was first discovered in \cite{MS1998} and exploited with great success in \cite{dritschel2005}. It plays a crucial role in Arveson's proof of \cite[Theorems 5.6 and 6.1]{arveson2008}, and the lack of an analogue in our context is a major obstacle to adapting his ideas.

In the other direction, we exhibit an example which shows that integrating a collection of pure states with the unique extension property against some probability measure does not necessarily preserve the unique extension property.

\begin{example}\label{E:uepint}
Let $\bB_2\subset \bC^2$ denote the open unit ball and let $\bS_2$ be its topological boundary, the sphere. Let $A(\bB_2)$ denote the \emph{ball algebra}, that is the algebra of continuous functions on $\ol{\bB_2}$ which are holomorphic on $\bB_2$. Endow this algebra with the supremum norm over $\ol{\bB_2}$. By means of the maximum modulus principle, we may regard $A(\bB_2)$ as a unital closed subalgebra of $C(\bS_2)$. Let 
\[
\fS=A(\bB_2)+A(\bB_2)^*\subset C(\bS_2)
\]
be the operator system generated by $A(\bB_2)$ inside of $C(\bS_2)$. For every $\lambda\in \ol{\bB_2}$, denote by $\eps_\lambda$  the state on $\fS$ uniquely determined by 
\[
\eps_\lambda(f)=f(\lambda),\quad f\in A(\bB_2).
\]
It is a classical fact \cite{gamelin1969} that $\bS_2$ is the Choquet boundary of $A(\bB_2)$. Hence, for each $\zeta\in \bS_2$ the state $\eps_\zeta$ on $\fS$ has a unique extension to a state $\chi_\zeta$ on $C(\bS_2)$. This state $\chi_\zeta$ is in fact the character on $C(\bS_2)$ of evaluation at $\zeta$, and in particular it is pure.

Consider now the unique rotation invariant regular Borel probability measure  $\sigma$ on $\bS_2$, and let $\psi_\sigma$ denote the state on $C(\bS_2)$ of integration against $\sigma$. By virtue of Cauchy's formula \cite[Equation 3.2.4]{rudin2008}, we have that $\psi_\sigma(f)=f(0)$ for every $f\in A(\bB_2)$. On the other hand, let $\mu$ denote Lebesgue measure on the circle 
\[
\{(\zeta_1,\zeta_2)\in \bS_2: |\zeta_1|=1,\zeta_2=0\}
\]
 and let $\psi_\mu$ denote the state on $C(\bS_2)$ of integration against $\mu$. Then, $\psi_\mu\neq \psi_\sigma$. The one-variable version of Cauchy's formula shows that $\psi_\mu(f)=f(0)$ for every $f\in A(\bB_2)$. In particular, we conclude that $\psi_\sigma$ does \emph{not} have the unique extension property with respect to $\fS$. Finally, note that
\[
\psi_\sigma=\int_{ \bS_2}\chi_\zeta  d\sigma(\zeta)
\]
and we saw in the previous paragraph that each $\chi_\zeta$ is pure and has the unique extension property with respect to $\fS$. 
\qed
\end{example}

From the point of view of hyperrigidity, we see that Theorem \ref{T:purestateuep} offers some flexibility, in the sense that it only requires that there be sufficiently many states with the unique extension property. Accordingly, we next aim to identify a class of natural examples where the unique extension property is satisfied by a separating family of states. We start with a general result.

Recall that if $\fJ$ is a closed two-sided ideal of a $\rC^*$-algebra $\fA$, then $\fJ$ admits a \emph{contractive approximate identity}. In other words, there is a net $(e_\lambda)_{\lambda\in \Lambda}$ of positive elements $e_\lambda\in \fJ$ such that $\|e_\lambda\|\leq 1$ for every $\lambda\in \Lambda$ and with the property that
\[
\lim_\lambda \|be_\lambda-b\|=\lim_\lambda \|e_\lambda b-b\|=0
\]
for every $b\in \fJ$.

\begin{theorem}\label{T:famuep}
Let $\fA$ be a unital $\rC^*$-algebra and let $\fJ\subset \fA$ be a closed two-sided ideal with contractive approximate identity $(e_\lambda)_{\lambda\in \Lambda}$. Let $\chi$ be a state on $\fA$ such that 
\[
\lim_\lambda\chi(ae_\lambda)=\chi(a)
\]
for every $a\in \fA$. Then, $\chi$ has the unique extension property with respect to $\fJ+\bC I$.
\end{theorem}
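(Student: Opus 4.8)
The plan is to unwind the definition of the unique extension property in this scalar setting. An extension of the state $\chi|_{\fJ+\bC I}$ to a completely positive map on $\fA$ with one-dimensional range is simply a state $\psi$ on $\fA$ that agrees with $\chi$ on $\fJ+\bC I$ (it is unital, since it agrees with $\chi$ at $I$, and a unital positive functional is automatically completely positive). Thus I must show that any such $\psi$ coincides with $\chi$. The crucial elementary observation is that for every $a\in\fA$ and every $\lambda$, the product $ae_\lambda$ lies in the ideal $\fJ$, so that $\psi(ae_\lambda)=\chi(ae_\lambda)$ for all $a$.

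With this in hand the proof reduces to a single limiting statement. Invoking the hypothesis $\lim_\lambda\chi(ae_\lambda)=\chi(a)$ together with the agreement on $\fJ$ gives
\[
\chi(a)=\lim_\lambda\chi(ae_\lambda)=\lim_\lambda\psi(ae_\lambda),
\]
so it suffices to prove that $\lim_\lambda\psi(ae_\lambda)=\psi(a)$ for the \emph{extension} $\psi$ as well. We are not handed the limit condition for $\psi$ directly, so this transfer of the hypothesis from $\chi$ to $\psi$ is the heart of the matter.

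To carry out the transfer I would estimate, using the Cauchy--Schwarz inequality for the state $\psi$,
\[
|\psi(a(e_\lambda-I))|^2\leq \psi(aa^*)\,\psi((I-e_\lambda)^2)\leq \|a\|^2\,\psi((I-e_\lambda)^2).
\]
The point is that although $(I-e_\lambda)^2$ is not in $\fJ$, its value under $\psi$ is computable from $\chi$: expanding and using $\psi(I)=1$ together with $\psi(e_\lambda)=\chi(e_\lambda)$ and $\psi(e_\lambda^2)=\chi(e_\lambda^2)$ (both $e_\lambda,e_\lambda^2\in\fJ$) yields $\psi((I-e_\lambda)^2)=\chi((I-e_\lambda)^2)$.

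It then remains to show $\chi((I-e_\lambda)^2)\to 0$, which is where the main (and rather modest) work lies. Taking $a=I$ in the hypothesis gives $\chi(e_\lambda)\to 1$. Since $0\leq e_\lambda\leq I$ forces $e_\lambda^2\leq e_\lambda$, while Cauchy--Schwarz for $\chi$ gives $\chi(e_\lambda)^2\leq\chi(e_\lambda^2)$, the squeeze
\[
\chi(e_\lambda)^2\leq\chi(e_\lambda^2)\leq\chi(e_\lambda)
\]
forces $\chi(e_\lambda^2)\to 1$. Hence $\chi((I-e_\lambda)^2)=\chi(e_\lambda^2)-2\chi(e_\lambda)+1\to 0$, which combined with the displayed estimate gives $\psi(ae_\lambda)\to\psi(a)$ and closes the loop. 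The only genuine subtlety is recognizing that the limit hypothesis on $\chi$ must be propagated to the unknown extension $\psi$ through the ideal, via Cauchy--Schwarz and the order relation $e_\lambda^2\le e_\lambda$; everything else is bookkeeping.
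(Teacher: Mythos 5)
Your proof is correct, and it takes a genuinely more elementary route than the paper's. Both arguments share the same skeleton: since $ae_\lambda\in\fJ$, any state $\psi$ extending $\chi|_{\fJ+\bC I}$ satisfies $\psi(ae_\lambda)=\chi(ae_\lambda)\to\chi(a)$, so everything reduces to transferring the limit property to $\psi$, i.e.\ showing $\psi(ae_\lambda)\to\psi(a)$. Where you differ is in how that transfer is done. The paper passes to the GNS representation $(\sigma_\psi,\fH_\psi,\xi_\psi)$ of $\psi$, decomposes $\fH_\psi=[\sigma_\psi(\fJ)\fH_\psi]\oplus\fH_\fJ'$, uses $\lim_\lambda\chi(e_\lambda)=1$ to conclude $\|\chi|_\fJ\|=1$ and hence that the cyclic vector $\xi_\psi$ lies entirely in $[\sigma_\psi(\fJ)\fH_\psi]$, and then invokes the standard fact that $\sigma_\psi(e_\lambda)$ converges strongly to the identity on that subspace, giving $\psi(ae_\lambda)=\langle\sigma_\psi(a)\sigma_\psi(e_\lambda)\xi_\psi,\xi_\psi\rangle\to\psi(a)$. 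You instead stay entirely at the level of the functionals: the Cauchy--Schwarz bound $|\psi(a(e_\lambda-I))|^2\le\|a\|^2\,\psi((I-e_\lambda)^2)$, the observation that $\psi((I-e_\lambda)^2)=\chi((I-e_\lambda)^2)$ because $e_\lambda,e_\lambda^2\in\fJ$, and the squeeze $\chi(e_\lambda)^2\le\chi(e_\lambda^2)\le\chi(e_\lambda)$ (valid since $e_\lambda^2\le e_\lambda$ when $0\le e_\lambda\le I$) force $\chi((I-e_\lambda)^2)\to 0$. In effect you prove directly that $\|(\sigma_\psi(e_\lambda)-I)\xi_\psi\|^2\to 0$ without ever constructing the representation. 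What the paper's approach buys is a geometric picture---the cyclic vector is supported on the subspace generated by the ideal---which matches the representation-theoretic flavour of the surrounding sections; what yours buys is brevity and self-containedness, requiring only the Cauchy--Schwarz inequality for states and elementary functional calculus. Every step of your argument checks out, including the preliminary reduction identifying completely positive extensions of $\chi|_{\fJ+\bC I}$ with states agreeing with $\chi$ there.
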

\begin{proof}
Let $\psi$ be a state on $\fA$ which agrees with $\chi$ on $\fJ+\bC I$. Let $(\sigma_\psi,\fH_\psi,\xi_\psi)$ be the associated GNS representation, where $\xi_\psi\in \fH_\psi$ is a unit cyclic vector. 
Put $\fH_\fJ=[\sigma_\psi(\fJ)\fH_\psi]$, which is an invariant subspace for $\sigma_\psi(\fA)$.
We can decompose $\fH_\psi$ as
\[
\fH_\psi=\fH_\fJ\oplus \fH_\fJ'
\]
and accordingly we have
\[
\sigma_\psi(a)=\sigma_\psi(a)|_{\fH_\fJ}\oplus \sigma_\psi(a)|_{\fH_\fJ'}, \quad a\in \fA.
\]
If we let $\pi'_\fJ:\fA\to B(\fH_\fJ')$ be the unital $*$-representation defined as
\[
\pi'_{\fJ}(a)=\sigma_\psi(a)|_{\fH_\fJ'}, \quad a\in \fA
\]
then it is readily verified that $\pi'_{\fJ}(\fJ)=\{0\}.$ Hence, if we decompose
\[
\xi_\psi=\xi_\fJ\oplus \xi'_\fJ\in \fH_\fJ\oplus \fH_\fJ'
\]
then we observe that
 \[
 \chi(b)=\psi(b)=\langle \sigma_\psi(b)\xi_\psi,\xi_\psi \rangle=\langle\sigma_\psi(b)\xi_\fJ,\xi_\fJ \rangle
  \]
for every $b\in \fJ$. Note however that
\[
1=\chi(I)=\lim_\lambda \chi(e_\lambda)
\]
whence $\|\chi|_{\fJ}\|=1$. We conclude that $\|\xi_\fJ\|=1$ and $\xi'_\fJ=0$, whence $\xi_\psi=\xi_\fJ\in \fH_\fJ$. 
A standard verification then yields
\[
\lim_\lambda \sigma_\psi(e_\lambda)\xi_\psi=\xi_\psi
\]
in the norm topology of $\fH_\psi$.
On the other hand, we have that $ae_\lambda\in \fJ$ for each $a\in \fA$  and for each $\lambda\in \Lambda$, and thus
\begin{align*}
\chi(a)&=\lim_\lambda \chi(ae_\lambda)=\lim_\lambda \psi(ae_\lambda)=\lim_\lambda \langle \sigma_\psi(ae_\lambda)\xi_\psi,\xi_\psi\rangle\\
&=\lim_\lambda \langle \sigma_\psi(a)\sigma_\psi(e_\lambda)\xi_\psi,\xi_\psi\rangle=\langle \sigma_\psi(a)\xi_\psi,\xi_\psi\rangle\\
&=\psi(a).
\end{align*}
This completes the proof.
\end{proof}

We can now identify natural examples where many states have the unique extension property. Recall that a subset $\D\subset B(\H)$ is said to be \emph{non-degenerate} if $\ol{\D \H}=\H$.

\begin{corollary}\label{C:sepfamuep}
Let $\fA\subset B(\H)$ be a unital $\rC^*$-algebra and let $\fJ\subset \fA$ be a closed two-sided ideal which is non-degenerate. Let $X\in B(\H)$ be a positive trace class operator with $\tr X=1$, and let $\tau_X$ be the state on $\fA$ defined as
\[
\tau_X(a)=\tr(aX), \quad a\in \fA.
\]
Then, $\tau_X$ has the unique extension property with respect to $\fJ+\bC I$.
\end{corollary}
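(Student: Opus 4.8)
The plan is to invoke Theorem~\ref{T:famuep} with $\chi=\tau_X$. Since $\fJ$ is a closed two-sided ideal of $\fA$, it is in particular a $\rC^*$-algebra, so it admits a contractive approximate identity $(e_\lambda)_{\lambda\in\Lambda}$. By Theorem~\ref{T:famuep} it then suffices to verify the single condition
\[
\lim_\lambda \tau_X(ae_\lambda)=\tau_X(a)\qquad\text{for every } a\in\fA,
\]
which will immediately yield that $\tau_X$ has the unique extension property with respect to $\fJ+\bC I$.

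First I would exploit the non-degeneracy of $\fJ$ to show that $(e_\lambda)$ converges to the identity $I$ in the strong operator topology. Indeed, for a vector of the form $b\eta$ with $b\in\fJ$ and $\eta\in\H$ one has $e_\lambda(b\eta)=(e_\lambda b)\eta\to b\eta$ because $\|e_\lambda b-b\|\to 0$; since $\ol{\fJ\H}=\H$ and the net is uniformly bounded by $1$, a standard $\eps/3$ argument gives $e_\lambda\xi\to\xi$ for every $\xi\in\H$. In particular $e_\lambda\to I$ in the weak operator topology as well, and this weaker conclusion is all that the remaining computation requires.

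Next I would rewrite the quantity $\tau_X(ae_\lambda)$ in a form amenable to a trace-class argument. Using cyclicity of the trace (valid since $X$, and hence $Xa$, is trace class while the remaining factors are bounded),
\[
\tau_X(ae_\lambda)=\tr(ae_\lambda X)=\tr(e_\lambda Xa)=\tr(e_\lambda Y),\qquad Y:=Xa,
\]
and similarly $\tau_X(a)=\tr(aX)=\tr(Xa)=\tr(Y)$. Here $Y$ is trace class because the trace-class operators form a two-sided ideal in $B(\H)$. The goal thus reduces to showing $\tr(e_\lambda Y)\to\tr(Y)$.

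The main obstacle is precisely this last interchange of the net limit with the trace, since the trace is not continuous for the weak operator topology in general; the trace-class hypothesis on $Y$ is what rescues the argument. Writing a Schmidt decomposition $Y=\sum_n s_n\langle\,\cdot\,,f_n\rangle g_n$ with $s_n\geq 0$, $\sum_n s_n<\infty$, and $\{f_n\},\{g_n\}$ orthonormal, one computes
\[
\tr(e_\lambda Y)=\sum_n s_n\langle e_\lambda g_n,f_n\rangle.
\]
For each fixed $n$ the term $\langle e_\lambda g_n,f_n\rangle\to\langle g_n,f_n\rangle$ by the weak convergence established above, while $|s_n\langle e_\lambda g_n,f_n\rangle|\leq s_n$ uniformly in $\lambda$. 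Splitting the sum into a finite initial segment (controlled by pointwise net convergence) and a tail (controlled by $\sum_{n>N}s_n<\eps$), a routine $\eps$-argument yields $\tr(e_\lambda Y)\to\sum_n s_n\langle g_n,f_n\rangle=\tr(Y)$. Combining this with the identities of the previous paragraph gives $\lim_\lambda\tau_X(ae_\lambda)=\tau_X(a)$, and Theorem~\ref{T:famuep} completes the proof.
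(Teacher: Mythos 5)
Your proposal is correct and follows essentially the same route as the paper: both deduce from non-degeneracy that the approximate identity $(e_\lambda)$ converges strongly to $I$, and then feed the resulting convergence $\tau_X(ae_\lambda)\to\tau_X(a)$ into Theorem~\ref{T:famuep}. The only difference is that where the paper simply cites the weak-$*$ continuity of $\tau_X$ to pass from strong convergence of the bounded net $(ae_\lambda)$ to convergence of the traces, you prove that fact by hand via cyclicity of the trace and a Schmidt decomposition of $Xa$; this is a more self-contained rendering of the same step, not a different argument.
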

\begin{proof}
Let $(e_\lambda)_\lambda$ be a contractive approximate identity for $\fJ$. By assumption, we know that $\H=\ol{\fJ\H}$. A standard calculation then shows that $(e_\lambda)_{\lambda\in \Lambda}$ converges to the identity operator in the strong operator topology of $B(\H)$. Since $\tau_X$ is weak-$*$ continuous, we conclude that
\[
\lim_{\lambda}\tau_X(ae_\lambda)=\tau_X(a),\quad a\in \fA.
\]
By virtue of Theorem \ref{T:famuep}, we conclude that $\tau_X$ has the unique extension property with respect to $\fJ+\bC I$.
\end{proof}

Note that Theorem \ref{T:purestateuep} implies in particular that $\rC^*(\Pi(\fA))=\pi(\fA)$ if there is a family of pure states on $\rC^*(\Pi(\fA))$ which separate $(\Pi-\pi)(\fA)$ and have the unique extension property with respect to $\pi(\fA)$, assuming that every irreducible $*$-representation of $\fA$ is a boundary representation for $\fS$. We point out here that it is not  generally the case that two unital $\rC^*$-algebras $\fB\subset \fA$ coincide whenever there is a family of pure states on $\fA$ which separate $\fA$ and have the unique extension property with respect to $\fB$. The next example illustrates this phenomenon, along with the various properties of states considered thus far.

\begin{example}\label{E:purerestriction}
Let $\H$ be an infinite dimensional Hilbert space. Let $\fB$ be the $\rC^*$-algebra generated by the identity $I$ and the ideal of compact operators $\K(\H)$. Clearly, $\fB\neq B(\H)$.  
Recall that any non-degenerate $*$-representation of $\K(\H)$ is unitarily equivalent to some multiple of the identity representation. Standard facts about the representation theory of $\rC^*$-algebras (see the discussion preceding \cite[Theorem I.3.4]{arveson1976inv}) then imply that any unital $*$-representation of $B(\H)$ is unitarily equivalent to 
\[
\id^{(\gamma)}\oplus \pi_Q
\]
where $\gamma$ is some cardinal number and $\pi_Q$ is a $*$-representation of $B(\H)$ which annihilates $\K(\H)$. In light of the GNS construction, this shows that a pure state on $B(\H)$ is either a vector state or it annihilates $\K(\H)$. For a general state $\psi$ on $B(\H)$, we have the decomposition
\[
\psi=\tau+\psi_Q
\]
where $\tau$ is a positive weak-$*$ continuous linear functional on $B(\H)$, and $\psi_Q$ is a positive linear functional on $B(\H)$ which annihilates $\K(\H)$. Furthermore, there is a positive trace class operator $X_\psi\in B(\H)$ such that 
\[
\tau(a)=\tr(aX_\psi),\quad a\in B(\H).
\]
We now  carefully analyze the states on $B(\H)$ using this description.

First, note that if $\psi=\tau+\psi_Q$ where both $\tau$ and $\psi_Q$ are non-zero, then the restriction  $\rho=\psi|_{\fB}$ is not pure. For then $\rho-\tau|_{\fB}$ and $\rho-\psi_Q|_{\fB}$ are positive linear functionals. However, $\tau|_{\fB}$ and $\psi_Q|_{\fB}$ cannot be linearly dependent as they are both non-zero, and $\psi_Q$ annihilates $\K(\H)$ while $\tau$ does not. 

Second, assume that $\psi=\tau+\psi_Q$ where $\psi_Q$ is non-zero. We claim that $\psi$ does not have the unique extension property with respect to $\fB$. Indeed, since $B(\H)/\K(\H)$ is not merely one-dimensional and $\psi_Q(I)\neq 0$, there exists a positive linear functional $\chi$ on $B(\H)$ which annihilates $\K(\H)$ and satisfies $\chi(I)=\psi_Q(I)$ while $\chi\neq \psi_Q$. Then, the state $\tau+\chi$ agrees with $\psi$ on $\fB$, yet it is distinct from $\psi$.

Third, assume that $\psi=\psi_Q$. We claim that $\psi$ restricts to be pure on $\fB$. To see this, put $\rho=\psi|_\fB$ and suppose that there are states $\phi_1,\phi_2$ on $\fB$ with the property that
\[
\rho=\frac{1}{2}(\phi_1+\phi_2).
\]
Then, we have $\phi_1(K)=-\phi_2(K)$ for every $K\in \K(\H)$. Since $\phi_1$ and $\phi_2$ are positive, we conclude that
\[
\phi_1(K)=\phi_2(K)=0
\]
whenever $K\in \K(\H)$ is positive. Using the Schwarz inequality for states \cite[Proposition 3.3]{paulsen2002}, we see that
\[
|\phi_1(K)|^2\leq \phi_1(K^*K)=0,\quad K\in \K(\H).
\]
Hence $\phi_1$ annihilates $\K(\H)$, and so does $\phi_2$ by the same argument. Since $\phi_1(I)=\phi_2(I)=1$ we must have $\phi_1=\phi_2=\rho$. Therefore, $\rho$ is pure.

Finally, assume  that $\psi=\tau$. Then, $\psi$ has the unique extension property with respect to $\fB$ by virtue of Corollary \ref{C:sepfamuep}. Also, it is readily seen from  Lemma \ref{L:srchar} that  $\psi$ restricts to be pure on $\fB$ if and only if $X_\psi$ has rank one (i.e. $\psi$ is a vector state).

\qed
\end{example}

\section{Unperforated pairs of subspaces in a $\rC^*$-algebra}\label{S:unperfor}

In the previous section, we focused on the unique extension property for states, partly because it provides a means to produce a family of states on a $\rC^*$-algebra with the pure restriction property (see Theorem \ref{T:purestateuep} and its proof). In this section, we explore a different path and introduce a concept, which, under appropriate conditions, also leads to the identification of an abundance of states that restrict to be pure. 

Let $\fA$ be a unital $\rC^*$-algebra. Let $\fS$ and $\fT$ be self-adjoint subspaces of $\fA$. We say that the pair $(\fS,\fT)$ is \emph{unperforated} if for every pair of self-adjoint elements $a\in \fS,b\in \fT$ such that $a\leq b$, we can find another self-adjoint element $b'\in \fT$ with the property that $\|b'\|\leq \|a\|$ and $a\leq b'\leq b$. Clearly, the pair $(\fS,\fT)$ is automatically unperforated if $\fS\subset \fT$. 

We provide now an example of an unperforated pair $(\fS,\fT)$ for which there are self-adjoint elements $a\in \fS,b\in \fT$ with $a\leq b$ such that no element $b'\in \fT$ can be chosen to satisfy $a\leq b'\leq b$ and $\|b'\|=\|a\|$. 

\begin{example}\label{E:unpmatrices}
Let $\bM_3$ denote the $3\times 3$ complex matrices. Consider
\[
s=\begin{bmatrix}
-2 & 0  &0\\
0 & -1 & 0\\
0 & 0 & -1
\end{bmatrix}\in \bM_3
\]
and
\[
t=\begin{bmatrix}
1 & 0  &0\\
0 & -2 & 0\\
0 & 0 & 1
\end{bmatrix}\in \bM_3.
\]
Let $\fS=\bC s$ and $\fT=\bC t$, which are both self-adjoint subspaces of $\bM_3$. Let $a=\alpha s$ and $b=\beta t$ for some $\alpha\in \bC$ and $\beta\in \bC$. Assume that $a\leq b$, so that
\[
\begin{bmatrix}
-2\alpha & 0  &0\\
0 & -\alpha & 0\\
0 & 0 & -\alpha
\end{bmatrix}\leq \begin{bmatrix}
\beta & 0  &0\\
0 & -2 \beta & 0\\
0 & 0 & \beta
\end{bmatrix}.
\]
This is equivalent to the inequalities
\[
-2\alpha\leq \beta, \quad -\alpha\leq \beta\leq \alpha /2.
\]
In particular, we see that $\alpha\geq 0$ and $|\beta|\leq \alpha$. Thus, 
\[
\| b\|=2|\beta|\leq 2\alpha=\|a\|.
\]
We conclude that the pair $(\fS,\fT)$ is unperforated.  In fact, it has an additional noteworthy property. Choose $\alpha=1$ and $\beta=1/2$. Then, we trivially have that
\[
-2\alpha\leq \beta, \quad -\alpha\leq \beta\leq \alpha /2
\]
so the corresponding elements $a\in \fS$ and $b\in \fT$ satisfy $a\leq b$ as seen above. If $\lambda\in \bR$ satisfies $a\leq \lambda t\leq b$, then 
\[
\begin{bmatrix}
-2 & 0  &0\\
0 & -1 & 0\\
0 & 0 & -1
\end{bmatrix}\leq \begin{bmatrix}
\lambda & 0  &0\\
0 & -2 \lambda & 0\\
0 & 0 & \lambda
\end{bmatrix}\leq \begin{bmatrix}
1/2 & 0  &0\\
0 & -1 & 0\\
0 & 0 & 1/2
\end{bmatrix}
\]
which forces $\lambda=1/2$. We infer that $\|\lambda t\|=1<\|a\|$. 
\qed
\end{example}

We will give further  examples of unperforated pairs below (see Proposition \ref{P:unpcomm}).  In the meantime, we illustrate their usefulness for our purposes 
by leveraging their defining property.

\begin{theorem}\label{T:unpstate}
Let $\fA$ be a unital $\rC^*$-algebra, let $\fS\subset \fA$ be a self-adjoint subspace and let $\fT\subset \fA$ be a separable operator system. Assume that the pair $(\fS,\fT)$ is unperforated. Then, for every self-adjoint element $s\in \fS$, there is a state $\psi$ on $\fA$ which restrict to be pure on $\fT$ and such that $|\psi(s)|=\|s\|$. In particular, there is a family of states on $\fA$ which separate $\fS$ and restrict to be pure on $\fT$.
\end{theorem}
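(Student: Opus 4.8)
The plan is to produce the value $\|s\|$ through the infimum formula of Lemma~\ref{L:infext} and then to transport it down to a pure state on $\fT$ using the Choquet decomposition of Theorem~\ref{T:choquet}, with the unperforated hypothesis serving precisely to bound the elements involved. Fix a non-zero self-adjoint $s\in\fS$. Since $\fS$ is a subspace, replacing $s$ by $-s$ if necessary, it suffices to treat the case $\sup\{\psi(s):\psi\in\S(\fA)\}=\|s\|$ and to construct a state $\psi$ on $\fA$ that is pure on $\fT$ with $\psi(s)=\|s\|$; the desired $|\psi(s)|=\|s\|$ then follows.

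First I would locate a state on $\fT$ whose ``upper extension value'' at $s$ equals $\|s\|$. By weak-$*$ compactness of $\S(\fA)$ there is a state $\psi_0$ on $\fA$ with $\psi_0(s)=\|s\|$; set $\rho=\psi_0|_\fT$. Since $\psi_0\in\E(\rho,\fA)$ while every state of $\fA$ takes value at most $\|s\|$ on $s$, Lemma~\ref{L:infext} (applied to the pair $\fT\subset\fA$ and the state $\rho$) gives
\[
\inf\{\rho(t): t\in\fT,\ t\geq s\}=\max_{\psi\in\E(\rho,\fA)}\psi(s)=\|s\|.
\]

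The crucial use of the hypothesis comes next. Given $t\in\fT$ with $t\geq s$, the pair $(\fS,\fT)$ being unperforated yields $t'\in\fT$ with $s\leq t'\leq t$ and $\|t'\|\leq\|s\|$, so the infimum above is unchanged if taken over
\[
\mathcal{C}=\{t'\in\fT:\ t'=t'^*,\ s\leq t',\ \|t'\|\leq\|s\|\},
\]
which is nonempty as $\|s\|I\in\mathcal{C}$. For each $t'\in\mathcal{C}$ we have $\rho(t')\leq\|t'\|\leq\|s\|$, while the infimum being $\|s\|$ forces $\rho(t')\geq\|s\|$; hence $\rho(t')=\|s\|$ for every $t'\in\mathcal{C}$. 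This exact equality on a norm-bounded set is the heart of the matter, and it is exactly what unperforatedness provides: absent the constraint $\|t'\|\leq\|s\|$ one obtains only the inequality $\rho(t)\geq\|s\|$, which is too weak to survive the disintegration below. I expect this middle step to be the main obstacle, since everything downstream is then a routine measure-theoretic argument.

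Finally I would descend to a pure state. Using separability of $\fT$ and Theorem~\ref{T:choquet}, write $\rho=\int_{\S_p(\fT)}\omega\,d\mu(\omega)$, and fix a countable norm-dense subset $D\subset\mathcal{C}$. For each $t'\in D$ the identity $\int_{\S_p(\fT)}\omega(t')\,d\mu(\omega)=\|s\|$ together with the pointwise bound $\omega(t')\leq\|s\|$ forces $\omega(t')=\|s\|$ for $\mu$-almost every $\omega$; intersecting over the countable set $D$ and invoking norm-continuity of each $\omega$ yields a $\mu$-conull set of pure states $\omega$ with $\omega(t')=\|s\|$ for all $t'\in\mathcal{C}$. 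For any such $\omega$ the same unperforated reduction gives $\inf\{\omega(t):t\in\fT,\ t\geq s\}=\inf_{t'\in\mathcal{C}}\omega(t')=\|s\|$, so Lemma~\ref{L:infext} furnishes $\psi\in\E(\omega,\fA)$ with $\psi(s)=\|s\|$. Then $\psi|_\fT=\omega$ is pure and $|\psi(s)|=\|s\|$, as required. Running this construction over all non-zero self-adjoint $s\in\fS$ assembles a family of states on $\fA$ that separates $\fS$ and restricts to be pure on $\fT$, which is the final assertion.
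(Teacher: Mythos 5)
Your proof is correct and follows essentially the same route as the paper's: both hinge on Lemma~\ref{L:infext}, the unperforated truncation of dominants of $s$ down to norm at most $\|s\|$, the Choquet decomposition of Theorem~\ref{T:choquet}, and a countable-density argument exploiting separability of $\fT$. The only difference is organizational: the paper argues by contradiction, covering $\S_p(\fT)$ by the $\mu$-null open sets $A_\chi=\{\omega:\omega(t'_\chi)<1\}$, whereas you run the same computation directly, showing that $\mu$-almost every pure state in the decomposition of $\rho$ equals $\|s\|$ on the whole truncated set $\mathcal{C}$ --- the two arguments are logical mirror images.
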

\begin{proof}
Fix a self-adjoint element $s\in \fS$. It is no loss of generality to assume that $\|s\|=1$. Upon replacing $s$ with $-s$, we can find a state $\theta$ on $\fA$ with the property that $\theta(s)=1$. Since $\fT$ is assumed to be separable we may invoke Theorem \ref{T:choquet} to find a Borel probability measure $\mu$ concentrated on $\S_p(\fT)$ with the property that 
\[
\theta(t)=\int_{\S_p(\fT)} \omega(t) d\mu(\omega), \quad t\in \fT.
\]
Assume on the contrary that for each pure state $\chi$ on $\fT$, we have that
\[
\max_{\psi\in \E(\chi,\fA)}|\psi(s)|<1.
\]
We will derive a contradiction by showing that $\mu(\S_p(\fT))=0$. To see this, first use Lemma \ref{L:infext}. We infer that for every pure state $\chi$ on $\fT$ we have that
\[
\inf\{ \chi(t):t\in \fT, t\geq s\}<1,
\]
and thus there is a self-adjoint element $t_\chi\in \fT$ such that $t_\chi\geq s$ and $\chi(t_\chi)<1$. Since the pair $(\fS, \fT)$ is unperforated, there is $t'_\chi\in \fT$ such that $\|t'_\chi\|\leq 1$ and $s\leq t'_\chi\leq t_\chi$. In particular, we note that $\chi(t'_\chi)<1$.
Consider now the weak-$*$ open set
\[
A_{\chi}=\{\omega\in \S_p(\fT): \omega(t'_{\chi})<1\}.
\]
Then, $\chi\in A_{\chi}$ and we see that
\[
\S_p(\fT)=\cup_{\chi\in \S_p(\fT)}A_{\chi}.
\]
Moreover, since $\|t'_{\chi}\|\leq 1$ and
\[
1=\theta(s)\leq \theta(t'_\chi)= \int_{\S_p(\fT)} \omega(t'_{\chi}) d\mu(\omega)
\]
we find $\mu(A_{\chi})=0$. 

By assumption, $\fT$ is separable and thus so is the subset
\[
Q=\{t'_\chi:\chi\in \S_p(\fT)\}.
\]
Accordingly let $\{\chi_n\}_{n\in \bN}$ be a countable subset of $\S_p(\fT)$ such that $\{t'_{\chi_n}\}_{n\in \bN}$ is dense in $Q$. Let $\chi\in \S_p(\fT)$ and $\omega\in A_{\chi}$, so that
\[
\omega(t'_{\chi})=1-\eps
\]
for some $\eps>0$. There is $N\in \bN$ such that $\|t'_{\chi_N}-t'_\chi\|<\eps/2$
whence 
\[
\omega(t'_{\chi_N})<\omega(t'_\chi)+\eps/2=1-\eps/2
\]
and $\omega\in A_{\chi_N}$. This shows that
\[
\S_p(\fT)=\cup_{\chi\in \S_p(\fT)}A_{\chi}= \cup_{n\in \bN}A_{\chi_n}.
\]
Since $\mu(A_{\chi_n})=0$ for every $n\in \bN$, we conclude that $\mu(\S_p(\fT))=0$. This contradicts the fact that $\mu$ has total mass $1$.
\end{proof}
Based on Theorem \ref{T:purerestriction}, we can now relate unperforated pairs and hyperrigidity. 
 
\begin{corollary}\label{C:unphyper}
Let $\fS$ be a separable operator system and let $\fA=\rC^*(\fS)$. Assume that every irreducible $*$-representation of $\fA$ is a boundary representation for $\fS$. Let $\pi:\fA\to B(\H)$ be a unital $*$-representa\-tion and let $\Pi:\fA\to B(\H)$ be a unital completely positive extension of $\pi|_\fS$. Then,  the pair $((\Pi-\pi)(\fA), \pi(\fA))$ is unperforated if and only if $\Pi=\pi$.
\end{corollary}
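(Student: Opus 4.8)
The plan is to reduce the corollary to the two workhorses already established, Theorem \ref{T:unpstate} and Theorem \ref{T:purerestriction}, so that the entire argument becomes a matter of matching hypotheses rather than fresh computation.

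The forward implication is immediate. If $\Pi=\pi$, then $(\Pi-\pi)(\fA)=\{0\}\subset\pi(\fA)$, and the remark following the definition of unperforated pairs (any pair $(\fS',\fT')$ with $\fS'\subset\fT'$ is unperforated) applies at once; concretely, for the only self-adjoint element $a=0$ one simply takes $b'=0$.

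For the converse, I would take the ambient unital $\rC^*$-algebra to be $\rC^*(\Pi(\fA))$, which contains both $(\Pi-\pi)(\fA)$ and $\pi(\fA)$ by (\ref{Eq:inclusion}). Before invoking Theorem \ref{T:unpstate} I must verify its standing hypotheses: the subspace $(\Pi-\pi)(\fA)$ is self-adjoint, since $\pi$ is a $*$-representation and $\Pi$ is self-adjoint as a unital completely positive map, while $\pi(\fA)$ is a unital self-adjoint subspace, hence an operator system. The key observation is that separability of $\fS$ forces $\fA=\rC^*(\fS)$, and therefore its image $\pi(\fA)$, to be separable; this is precisely where the separability hypothesis of the corollary is consumed. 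Granting that $((\Pi-\pi)(\fA),\pi(\fA))$ is unperforated, Theorem \ref{T:unpstate} then produces a family of states on $\rC^*(\Pi(\fA))$ that separate $(\Pi-\pi)(\fA)$ and restrict to pure states on $\pi(\fA)$.

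This family is exactly what statement (iii) of Theorem \ref{T:purerestriction} demands, so that theorem yields $\pi=\Pi$ and closes the argument. I do not expect a serious obstacle here, as the heavy lifting has been delegated to the earlier results; the essential point is to recognize that the conclusion of Theorem \ref{T:unpstate} matches verbatim hypothesis (iii) of Theorem \ref{T:purerestriction}. The only steps demanding genuine care are the correct identification of the ambient algebra as $\rC^*(\Pi(\fA))$ and the verification that $\pi(\fA)$ is a \emph{separable} operator system, which is the sole place the separability of $\fS$ enters.
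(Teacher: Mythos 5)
Your proposal is correct and follows exactly the paper's own route: the forward direction is the trivial observation that $\{0\}\subset\pi(\fA)$ gives an unperforated pair, and the converse applies Theorem \ref{T:unpstate} (in the ambient algebra $\rC^*(\Pi(\fA))$, using separability of $\pi(\fA)$ inherited from $\fS$) to produce a separating family of states with pure restrictions, then concludes via Theorem \ref{T:purerestriction}. Your explicit verification of the standing hypotheses of Theorem \ref{T:unpstate} is a useful elaboration, but the argument is the same.
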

\begin{proof}
If $\Pi=\pi$, then $(\Pi-\pi)(\fA)=\{0\}\subset \pi(\fA)$ so that  the pair $((\Pi-\pi)(\fA), \pi(\fA))$ is trivially unperforated. 
Conversely, assume that the pair $((\Pi-\pi)(\fA), \pi(\fA))$ is unperforated. By Theorem \ref{T:unpstate}, there is a family of states on $\rC^*(\Pi(\fA))$ which separate $(\Pi-\pi)(\fA)$ and restrict to be pure on $\pi(\fA)$. Then $\Pi=\pi$ by virtue of Theorem \ref{T:purerestriction}.
\end{proof}

Next, we  exhibit a non-trivial condition which ensures that a pair $(\fS,\fT)$ is unperforated. 

\begin{proposition}\label{P:unpcomm}
 Let $\fA$ be a unital $\rC^*$-algebra. Let $\fS$ and $\fT$ be self-adjoint subspaces of $\fA$ such that $\fT$ is unital. Assume that $\fS$ commutes with $\fT$. Then, the pair $(\fS,\rC^*(\fT))$ is unperforated.
\end{proposition}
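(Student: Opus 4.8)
The plan is to produce $b'$ directly by truncating $b$ from above at the level $\|a\|$, so that $b'$ is a function of $b$ alone and hence automatically lies in $\rC^*(\fT)$. First I would record that the commutativity hypothesis upgrades from $\fT$ to $\rC^*(\fT)$: since $\fS$ is self-adjoint, for each self-adjoint $a\in\fS$ the commutant $\{a\}'$ is a norm-closed $*$-subalgebra of $\fA$ (closedness under the adjoint uses $a^*=a$), and it contains $\fT$; hence it contains $\rC^*(\fT)$. Thus every self-adjoint element of $\fS$ commutes with all of $\rC^*(\fT)$.

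Now fix self-adjoint elements $a\in\fS$ and $b\in\rC^*(\fT)$ with $a\le b$, and set $r=\|a\|\ge 0$. Since $a$ is self-adjoint we have $-rI\le a$, and combined with $a\le b$ this yields $b\ge -rI$, so $\Spec(b)\subseteq[-r,\infty)$. Let $f\colon\bR\to\bR$ be the continuous function $f(t)=\min(t,r)$ and define $b'=f(b)$ via the continuous functional calculus; then $b'$ is a self-adjoint element of $\rC^*(b)\subseteq\rC^*(\fT)$. I would then verify the two easy requirements spectrally: from $f(t)\le t$ we get $b'\le b$, and from $-r\le f(t)\le r$ for all $t\ge -r$ (valid because $r\ge 0$) together with $\Spec(b)\subseteq[-r,\infty)$ we obtain $\Spec(b')\subseteq[-r,r]$, i.e. $\|b'\|\le r=\|a\|$.

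The remaining inequality $a\le b'$ is the crux, and this is exactly where commutativity is indispensable. Because $a$ commutes with $b$ and both are self-adjoint, the $\rC^*$-subalgebra $\rC^*(a,b,I)$ is commutative, hence isomorphic by Gelfand to $C(X)$ for a compact Hausdorff space $X$; under this isomorphism $a,b,b'$ become real-valued continuous functions $\hat a,\hat b,\min(\hat b,r)$, where consistency of the functional calculus across $\rC^*$-subalgebras containing $b$ identifies $b'$ with $\min(\hat b,r)$. The hypotheses $a\le b$ and $a\le rI$ translate to the pointwise inequalities $\hat a\le\hat b$ and $\hat a\le r$, whence $\hat a\le\min(\hat b,r)$ pointwise, that is $a\le b'$. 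This completes the verification that $(\fS,\rC^*(\fT))$ is unperforated.

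The only subtlety I anticipate is routine bookkeeping around the unit and the functional calculus: ensuring that $b'$ genuinely lands in $\rC^*(\fT)$ (here $f(0)=0$, and in any case $I\in\fT\subseteq\rC^*(\fT)$ by unitality) and that the truncation computed in $\rC^*(b)$ agrees with the pointwise $\min(\hat b,r)$ in the Gelfand picture. Beyond that, the genuinely load-bearing ingredient is the passage to the commutative algebra $\rC^*(a,b,I)$, which converts the operator inequality $a\le b'$ into a pointwise statement; without the commutativity of $\fS$ with $\fT$ no such reduction is available, and indeed the truncation $\min(b,rI)$ need not dominate $a$ in the noncommutative setting.
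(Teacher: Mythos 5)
Your proof is correct and follows essentially the same route as the paper's: truncate $b$ by continuous functional calculus and pass to the commutative unital $\rC^*$-algebra $\rC^*(a,b,I)$ via the Gelfand transform to verify the remaining inequality $a\le b'$. The only cosmetic differences are that you truncate one-sidedly with $f(t)=\min(t,\|a\|)$ and deduce $\hat a\le\min(\hat b,\|a\|)$ directly from $\hat a\le\hat b$ and $\hat a\le\|a\|$, whereas the paper uses the two-sided truncation together with $f(a)=a$ and the fact that $f$ is non-decreasing.
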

\begin{proof}
Let $a\in \fS,b\in \rC^*(\fT)$ be self-adjoint elements such that $a\leq b$. Define a continuous function $f:\bR\to \bR$ as 
\[
f(t)=\begin{cases}
t & \text{ if } |t|\leq \|a\|\\
\|a\| & \text{ if } t>\|a\|\\
-\|a\| & \text{ if } t<-\|a\|\\
\end{cases}.
\]
Observe that $f(a)=a$ and that $\|f(b)\|\leq \|a\|$ by choice of $f$. Now, since $a\leq b$ we must have $-\|a\| I\leq b$ and thus the spectrum of $b$ is contained in $[-\|a\|,\|b\|]$. Furthermore, we have that $f(t)\leq t$ for every $t\geq -\|a\|$. These two observations together show that $f(b)\leq b$. 

We claim that $a \leq f(b)$.  To see this, we note that $\rC^*(\fT)$ commutes with $\rC^*(\fS)$ since $\fS$ and $\fT$ are self-adjoint, whence the unital $\rC^*$-algebra $\rC^*(a,b,I)$ is commutative. Therefore, there is a compact Hausdorff space $\Omega$ and a unital $*$-isomorphism $\Phi:\rC^*(a,b,I)\to C(\Omega)$. Put $\phi_a=\Phi(a), \phi_b=\Phi(b)$. Recalling that $a=f(a)$, the claim is equivalent to the fact that $f\circ \phi_a\leq f\circ \phi_b$ on $\Omega$. 
Since we have that $a\leq b$, it follows that $\phi_a\leq \phi_b$. 
The function $f$ is non-decreasing, whence $f\circ \phi_a\leq f\circ \phi_b$ on $\Omega$ and the claim is established. Finally, the proof is completed by choosing $b'=f(b)$.
\end{proof}

In particular, we single out the following noteworthy consequence.

\begin{corollary}\label{C:unphypercomm}
Let $\fS$ be a separable operator system and let $\fA=\rC^*(\fS)$. Assume that every irreducible $*$-representation of $\fA$ is a boundary representation for $\fS$. Let $\pi:\fA\to B(\H)$ be a unital $*$-representation and let $\Pi:\fA\to B(\H)$ be a unital completely positive extension of $\pi|_\fS$. Assume that $(\Pi-\pi)(\fA)$ and $\pi(\fA)$ commute. Then, $\Pi=\pi$.
\end{corollary}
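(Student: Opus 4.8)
The plan is to derive this as a direct corollary of the two immediately preceding results, so the proof is essentially a one-line chaining of hypotheses. The key observation is that Corollary \ref{C:unphypercomm} differs from Corollary \ref{C:unphyper} only in that it replaces the abstract hypothesis ``the pair $((\Pi-\pi)(\fA),\pi(\fA))$ is unperforated'' with the more concrete and verifiable hypothesis that the two subspaces commute. Thus the entire burden of the proof is to show that commutativity implies unperforatedness, and that implication is exactly what Proposition \ref{P:unpcomm} provides.

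First I would set $\fS = (\Pi-\pi)(\fA)$ and $\fT = \pi(\fA)$, and check that Proposition \ref{P:unpcomm} applies to this pair. The proposition requires that $\fT$ be unital, that $\fS$ and $\fT$ be self-adjoint subspaces, and that $\fS$ commute with $\fT$. Here $\pi(\fA)$ is a unital $\rC^*$-algebra, hence a unital self-adjoint subspace; the subspace $(\Pi-\pi)(\fA)$ is self-adjoint since both $\pi$ and $\Pi$ are $*$-preserving (being a $*$-representation and a unital completely positive map respectively); and the commutativity hypothesis of the corollary is precisely the commutativity hypothesis of the proposition. A small point worth noting is that Proposition \ref{P:unpcomm} concludes that $(\fS, \rC^*(\fT))$ is unperforated; but since $\fT = \pi(\fA)$ is already a $\rC^*$-algebra we have $\rC^*(\fT) = \fT$, so the proposition directly yields that the pair $((\Pi-\pi)(\fA), \pi(\fA))$ is unperforated.

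With unperforatedness of the pair in hand, I would simply invoke Corollary \ref{C:unphyper}, whose hypotheses (separability of $\fS$, every irreducible $*$-representation of $\fA$ being a boundary representation for $\fS$, and the maps $\pi, \Pi$ as specified) are inherited verbatim from the statement of the present corollary. Corollary \ref{C:unphyper} then delivers the conclusion $\Pi = \pi$. There is no genuine obstacle here: the corollary is a packaging of the two prior results, and the only thing to be careful about is confirming that $(\Pi-\pi)(\fA)$ is genuinely self-adjoint so that Proposition \ref{P:unpcomm} may be applied. Since this is immediate from the self-adjointness of both $\pi$ and $\Pi$, the proof is short and formal.

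\begin{proof}
The subspace $(\Pi-\pi)(\fA)$ is self-adjoint because both $\pi$ and $\Pi$ are self-adjoint maps, and $\pi(\fA)$ is a unital $\rC^*$-algebra. By assumption these two subspaces commute, so Proposition \ref{P:unpcomm} shows that the pair $((\Pi-\pi)(\fA), \rC^*(\pi(\fA)))$ is unperforated. Since $\pi(\fA)$ is already a $\rC^*$-algebra, we have $\rC^*(\pi(\fA))=\pi(\fA)$, and thus the pair $((\Pi-\pi)(\fA), \pi(\fA))$ is unperforated. Corollary \ref{C:unphyper} now applies and yields $\Pi=\pi$.
\end{proof}
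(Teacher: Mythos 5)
Your proof is correct and is precisely the paper's argument: the paper's own proof reads ``Simply combine Proposition \ref{P:unpcomm} with Corollary \ref{C:unphyper},'' and your write-up just makes explicit the routine verifications (self-adjointness of $(\Pi-\pi)(\fA)$, unitality of $\pi(\fA)$, and $\rC^*(\pi(\fA))=\pi(\fA)$) that the paper leaves to the reader.
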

\begin{proof}
Simply combine Proposition \ref{P:unpcomm} with Corollary \ref{C:unphyper}.
\end{proof}

In trying to verify that a general pair $(\fS,\rC^*(\fT))$ is unperforated,  one may hope to proceed as in the proof of Proposition \ref{P:unpcomm} and use the functional calculus to ``truncate" $b$  inside of $\rC^*(\fT)$ to have norm at most $\|a\|$. However, in general it is not clear that this truncation should still dominate $a$. Indeed, the non-decreasing function $f$ defined in the proof is not operator monotone. In fact, there are many simple instances of non-unperforated pairs.

\begin{example}\label{E:perf}
Let $\fT=\bC\oplus \bC$ and let $\fS\subset \bM_2$ be the self-adjoint subspace generated by the matrix
\[
\begin{bmatrix}
0 & 1 \\
1 & 0
\end{bmatrix}.
\]
Then, the pair $(\fS,\fT)$ is not unperforated. Indeed, consider 
\[
a=\begin{bmatrix}
0 & 2 \\
2 & 0
\end{bmatrix}\in \fS, \quad b=\begin{bmatrix}
1 & 0 \\
0 & 5
\end{bmatrix}\in \fT
\]
and note that
\[
b-a=\begin{bmatrix}
1 & -2 \\
-2 & 5
\end{bmatrix}
\]
is positive, whence $a\leq b$. Let
\[
b'=\begin{bmatrix}
x & 0 \\
0 & y
\end{bmatrix}\in \fT
\]
be self-adjoint such that $a\leq b'$ and $\|b'\|\leq \|a\|=2$. 
Then,
\[
b'-a=\begin{bmatrix}
x & -2 \\
-2 & y
\end{bmatrix}\geq 0.
\]
In particular, we see that $x\geq 0, y\geq 0$ and $xy\geq 4$. Since $\|b'\|\leq 2$, we conclude that $\max\{x,y\}\leq 2$. Hence, $x=y=2$ so that
\[
b'=\begin{bmatrix}
2 & 0\\
0 & 2
\end{bmatrix}.
\]
But then
\[
b-b'=\begin{bmatrix}
-1 & 0\\
0 & 3
\end{bmatrix}
\]
is not positive.
\qed
\end{example}

In view of this difficulty, a pressing question emerges: how common are unperforated pairs? We saw in Proposition \ref{P:unpcomm} that they can be found easily in the presence of some form of commutativity, but Example \ref{E:perf} indicates the situation  may be bleak in general. Accordingly we aim to introduce flexibility in the defining condition for a pair to be unperforated. The key property we require is the following. 

A $\rC^*$-algebra $\fA$ is said to have the \emph{weak expectation property} \cite{lance1973} if for every injective $*$-representation $\pi:\fA\to B(\H_\pi)$, there is a unital completely positive map 
\[
E_\pi:B(\H_\pi)\to \pi(\fA)''
\]
satisfying $E\pi(a)=\pi(a)$ for every $a\in \fA$ (see for instance \cite{BO2008} for details). 
The next development shows that if $\fB\subset \fA$ are unital $\rC^*$-algebras, then the weak expectation property for $\fB$ may be viewed as a variation on the fact that the pair $(\fA,\fB)$ is unperforated. Interestingly, this fact uses (albeit indirectly) some recent technology from the theory of tensor products of operator systems.

\begin{theorem}\label{T:weakunperexample}
Let $\fA$ be a unital $\rC^*$-algebra and let $\fB\subset \fA$ be a unital separable $\rC^*$-subalgebra with the weak expectation property. Let $a\in \fA$ be a self-adjoint element and let $\eps>0$. Then, there is a sequence $(\beta_n)_n$ of self-adjoint elements in $\fB$ with the following properties.
\begin{enumerate}

\item[\rm{(1)}] We have  $\|\beta_n\|\leq (1+\eps)\|a\|$ for every $n\in \bN$ and
\[
\limsup_{n\to\infty}\|\beta_n\|\leq \|a\|.
\]

\item[\rm{(2)}] We have
\[
\limsup_{n\to \infty} \psi(\beta_n)\leq \inf\{ \psi(b):b\in \fB, b\geq a\}
\]
and 
\[
\sup\{\psi(c): c\in \fB,  c\leq a\} \leq \liminf_{n\to \infty} \psi(\beta_n)
\]
for every state $\psi$ on $\fB$.
\end{enumerate}
\end{theorem}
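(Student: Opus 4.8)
The plan is to reduce the statement to a single interpolation principle for $\fB$ and then to build $(\beta_n)_n$ by exhausting countably many order constraints. To see what is needed, fix a state $\psi$ on $\fB$ and recall from Lemma \ref{L:infext} (applied to the inclusion $\fB\subset\fA$) that the two quantities in (2) are precisely $\max_{\psi'\in\E(\psi,\fA)}\psi'(a)$ and $\min_{\psi'\in\E(\psi,\fA)}\psi'(a)$. Thus (2) asks for a sequence in $\fB$ whose values under every state are asymptotically trapped inside the interval cut out by the state extensions of $\psi$ to $\fA$. Writing $M=\|a\|$, set
\[
L=\{c\in\fB:c=c^*,\ c\leq a\},\qquad U=\{b\in\fB:b=b^*,\ b\geq a\},
\]
both nonempty since $-MI\in L$ and $MI\in U$. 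Since $c\leq a\leq b$ forces $c\leq b$ in $\fA$, and since the order on $\fB$ agrees with that on $\fA$, every element of $L$ lies below every element of $U$ \emph{inside} $\fB$.

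The key input, and the genuinely hard part, is the \emph{tight Riesz interpolation property}: because $\fB$ is separable and has the weak expectation property, the recent operator-system tensor technology guarantees that for any finite families $c_1,\dots,c_p\in L$ and $b_1,\dots,b_q\in U$ and any $\eta>0$, there is a self-adjoint $x\in\fB$ with $c_i-\eta I\leq x\leq b_j+\eta I$ for all $i,j$. This is the step in which the commutativity hypothesis of Proposition \ref{P:unpcomm} is replaced by WEP; it is what circumvents the obstruction of Example \ref{E:perf}, and its proof is exactly where the tensor-product machinery enters, exact (as opposed to tight) interpolation generally failing in a $\rC^*$-algebra.

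With this principle in hand I would finish as follows. Using separability of $\fB$, choose countable dense subsets $\{c_k\}_{k\in\bN}\subset L$ and $\{b_k\}_{k\in\bN}\subset U$. For each $n$ put $\eta_n=\min\{1/n,\,\eps M\}$ and apply the interpolation principle to the families $\{c_1,\dots,c_n,-MI\}$ and $\{b_1,\dots,b_n,MI\}$ (all lowers below all uppers, as noted) to obtain a self-adjoint $\beta_n\in\fB$ with
\[
c_k-\eta_n I\leq \beta_n\leq b_k+\eta_n I\quad(1\leq k\leq n)\qquad\text{and}\qquad -(M+\eta_n)I\leq\beta_n\leq (M+\eta_n)I.
\]
The scalar constraints give $\|\beta_n\|\leq M+\eta_n\leq(1+\eps)M$ for every $n$ and $\limsup_n\|\beta_n\|\leq M$, which is (1). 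For (2), fix a state $\psi$ on $\fB$ and an index $k$; for all $n\geq k$ the relation $\beta_n\leq b_k+\eta_n I$ yields $\psi(\beta_n)\leq\psi(b_k)+\eta_n$, so $\limsup_n\psi(\beta_n)\leq\psi(b_k)$, and taking the infimum over $k$ together with norm-continuity of $\psi$ and density of $\{b_k\}$ in $U$ gives $\limsup_n\psi(\beta_n)\leq\inf\{\psi(b):b\in U\}$. The symmetric argument using $\beta_n\geq c_k-\eta_n I$ gives $\liminf_n\psi(\beta_n)\geq\sup\{\psi(c):c\in L\}$, completing (2).

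The only real obstacle is the interpolation principle itself; once it is invoked, the construction and verification are routine bookkeeping with the parameters $\eta_n$. I would be careful to flag that the scalar bounds $\pm MI$ are what deliver the norm estimate independently of the (possibly large) norms of the $c_k,b_k$, and that the $\eps$ in (1) absorbs the tightness slack $\eta_n$.
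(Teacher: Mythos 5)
Your proof is correct and takes essentially the same route as the paper: both rest on the key fact (Kavruk's theorem, proved via operator-system tensor products) that the weak expectation property gives $\fB$ the tight Riesz interpolation property in $B(\H)$, applied with middle element $a$ to countable dense subsets of the upper and lower sets $\U_a=\{b\in\fB: b\geq a\}$ and $\L_a=\{c\in\fB: c\leq a\}$ with tightness parameter of order $1/n$. The only cosmetic difference is that the paper builds the norm estimate into the interpolation via the scalar constraints $\pm(1+\eps n^{-1})\|a\|I$, whereas you adjoin $\pm\|a\|I$ to the families and absorb the slack $\eta_n\leq \eps\|a\|$; these are equivalent maneuvers.
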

\begin{proof}
Assume that $\fB\subset \fA\subset B(\H)$. Consider the sets
\[
\U_a=\{b\in \fB:b\geq a\}, \quad \L_a=\{c\in \fB:c\leq a\}.
\] 
Since $\fB$ is separable, so are $\U_a$ and $\L_a$. Thus, there are countable dense subsets $\{u_n\}_{n\in \bN} \subset \U_a, \{\ell_n\}_{n\in \bN}\subset \L_a$. Because $\fB$ has the weak expectation property, it follows from \cite[Theorem 7.4]{kavruk2012} that it has the so-called tight Riesz interpolation property in $B(\H)$. Noting that $\fB$ is unital and that
\[
-(1+\eps n^{-1})\|a\|I< a<(1+\eps n^{-1})\|a\|I, \quad n\in \bN
\]
this interpolation property guarantees that for each $n\in \bN$ we can find a self-adjoint element $\beta_{n}\in \fB$ satisfying 
\[
-(1+\eps n^{-1})\|a\|I< \beta_{n}<(1+\eps n^{-1})\|a\|I
\]
and
\[
\ell_j- n^{-1}I<\beta_{n}<u_k+n^{-1} I
\]
for every $1\leq j,k\leq n$. In particular, we note that 
\[
\|\beta_{n}\| \leq (1+\eps)\|a\|, \quad n\in \bN
\]
and
\[
\limsup_{n\to\infty}\|\beta_n\|\leq \|a\|.
\]
Moreover it follows from the construction of the sequence $(\beta_n)_n$ that if $\psi$ is a state on $\fB$ then
\[
\sup_{m\in \bN}\psi(\ell_m)\leq \liminf_{n\to \infty}\psi(\beta_n)\leq  \limsup_{n\to \infty}\psi(\beta_n)\leq \inf_{m\in \bN} \psi(u_m).
\]
On the other hand, we have that
\[
 \inf_{m\in \bN} \psi(u_m)= \inf\{ \psi(b):b\in \fB, b\geq a\}
\]
and
\[
\sup_{m\in \bN}\psi(\ell_m)=\sup\{\psi(c): c\in \fB,  c\leq a\}
\]
by density. Hence,
\[
\limsup_{n\to \infty}\psi(\beta_n)\leq  \inf\{ \psi(b):b\in \fB, b\geq a\}
\]
and
\[
\sup\{\psi(c): c\in \fB,  c\leq a\} \leq \liminf_{n\to \infty} \psi(\beta_n).
\]
\end{proof}

Of course, the weak expectation property arises naturally without the need for any kind of commutativity, so that Properties (1) and (2) from Theorem \ref{T:weakunperexample} constitute a flexible substitute for the fact that the pair $(\fA,\fB)$ is unperforated. We substantiate this claim in what follows. We start with a concrete observation.

\begin{example}\label{E:weakunpexample}
Let $\H$ be an infinite dimensional separable Hilbert space. The unital separable $\rC^*$-algebra $\fB=\K(\H)+\bC I$ is nuclear since $\K(\H)$ is nuclear  \cite[Exercise 2.3.5]{BO2008}. In particular, it has the weak expectation property \cite[Exercise 2.3.14]{BO2008}. Next, let $\theta$ be a state on $B(\H)$ which has the unique extension property with respect to $\fB$. By Example \ref{E:purerestriction} we conclude that there is a positive trace class operator $X_\theta\in B(\H)$ with $\tr(X_\theta)=1$ and such that
\[
\theta(a)=\tr(aX_\theta), \quad a\in B(\H).
\]
Upon applying the spectral theorem to $X_\theta$, we may find a sequence of positive numbers  $(t_n)_{n\in\bN}$ and a sequence of orthonormal vectors $(\xi_n)_{n\in\bN}$ such that
\[
\theta(a)=\tr(aX_\theta)=\sum_{n=1}^\infty t_n \langle a\xi_n,\xi_n\rangle
\]
for every $a\in B(\H)$. In particular, we see that $\sum_{n=1}^\infty t_n=1$. Fix now a self-adjoint element $a\in B(\H)$. A moment's thought reveals that there must be $\xi\in \{\xi_n\}_{n\in \bN}$ with the property that
\[
|\langle a\xi,\xi \rangle|\geq |\theta(a)|.
\]
Furthermore, if we denote by $\chi$ the vector state on $B(\H)$ corresponding to $\xi$, we see from Example \ref{E:purerestriction} that $\chi$ restricts to be pure on $\fB$. This is a manifestation of a general phenomenon, as we show next.
%
%
%
%
%
\qed
\end{example}
%

\begin{theorem}\label{T:weppnorm}
Let $\fA$ be a unital $\rC^*$-algebra and let $\fB\subset \fA$ be a unital separable $\rC^*$-subalgebra with the weak expectation property.  Let $\theta$ be a state on $\fA$ which has the unique extension property with respect to $\fB$. Then, for every self-adjoint element $a\in \fA$ there is a state $\psi$ on $\fA$ which restricts to be pure on $\fB$ and such that
$|\psi(a)|\geq |\theta(a)|$.
\end{theorem}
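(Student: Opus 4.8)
The plan is to establish the existence of $\psi$ by contradiction, squeezing a certain function on the pure state space of $\fB$ between two integral estimates. After replacing $a$ by $-a$ if necessary, we may assume $\theta(a)\geq 0$, and it then suffices to produce a state $\psi$ on $\fA$ with $\psi|_\fB$ pure and $\psi(a)\geq\theta(a)$. For $\omega\in\S_p(\fB)$ set $G(\omega)=\inf\{\omega(b):b\in\fB,\ b\geq a\}$; by Lemma \ref{L:infext} this equals $\max_{\psi\in\E(\omega,\fA)}\psi(a)$, and since every state on $\fA$ restricting to a pure state on $\fB$ is an element of some $\E(\omega,\fA)$ with $\omega\in\S_p(\fB)$, the failure of the theorem is precisely the hypothesis that $G(\omega)<\theta(a)$ for every $\omega\in\S_p(\fB)$. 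Note also that the unique extension property of $\theta$ forces $\E(\theta|_\fB,\fA)=\{\theta\}$, so Lemma \ref{L:infext} gives $\inf\{\theta(b):b\geq a\}=\theta(a)=\sup\{\theta(c):c\leq a\}$.

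Next I would bring in the weak expectation property. Applying Theorem \ref{T:weakunperexample} with, say, $\eps=1$ produces self-adjoint $\beta_n\in\fB$ with $\|\beta_n\|\leq 2\|a\|$ such that $\limsup_n\omega(\beta_n)\leq G(\omega)$ for every $\omega\in\S_p(\fB)$, while applying Property (2) to $\psi=\theta|_\fB$ together with the equalities above yields $\lim_n\theta(\beta_n)=\theta(a)$. Since $\fB$ is separable I would invoke the Choquet representation (Theorem \ref{T:choquet}) to write $\theta(b)=\int_{\S_p(\fB)}\omega(b)\,d\mu(\omega)$ for a probability measure $\mu$ concentrated on $\S_p(\fB)$. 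Integrating the pointwise bound and applying the reverse Fatou lemma---valid because the integrands $\omega\mapsto\omega(\beta_n)$ are uniformly bounded by $2\|a\|$---gives
\[
\theta(a)=\lim_n\int_{\S_p(\fB)}\omega(\beta_n)\,d\mu(\omega)\leq \int_{\S_p(\fB)}\limsup_n\omega(\beta_n)\,d\mu(\omega)\leq \int_{\S_p(\fB)}G(\omega)\,d\mu(\omega).
\]
On the other hand, $G(\omega)\leq\omega(b)$ for every $b\in\fB$ with $b\geq a$, so integrating and taking the infimum over such $b$ yields $\int G\,d\mu\leq\inf\{\theta(b):b\geq a\}=\theta(a)$. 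Hence $\int G\,d\mu=\theta(a)$, that is $\int_{\S_p(\fB)}(\theta(a)-G(\omega))\,d\mu(\omega)=0$; but $\theta(a)-G$ is a Borel function that is \emph{strictly} positive throughout $\S_p(\fB)$ under the contradiction hypothesis, contradicting the fact that $\mu$ is a probability measure with full mass on $\S_p(\fB)$.

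The substance of the argument lies in the inequality $\int G\,d\mu\geq\theta(a)$: the opposite bound is automatic, and it is exactly this reverse estimate that the weak expectation property supplies, through the approximating sequence $(\beta_n)_n$ of Theorem \ref{T:weakunperexample} whose uniform norm control legitimizes the reverse Fatou step. This is the analogue, in the present relaxed setting, of the norm-controlled interpolant furnished by an unperforated pair in the proof of Theorem \ref{T:unpstate}. The only routine points to verify are the measurability of $G$ (it is upper semicontinuous, being an infimum of weak-$*$ continuous affine functions) and of $\omega\mapsto\limsup_n\omega(\beta_n)$, together with the elementary identification of states restricting to a pure state on $\fB$ with extensions of pure states of $\fB$. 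I expect no genuine difficulty beyond correctly assembling these squeezed inequalities; the concrete shadow of this measure-theoretic selection is Example \ref{E:weakunpexample}, where $\theta=\tau_X$ is a convex combination of vector states and one simply selects the most favourable one.
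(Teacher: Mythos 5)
Your proof is correct and takes essentially the same route as the paper's: a contradiction argument combining Lemma \ref{L:infext} (together with the unique extension property of $\theta$), the sequence $(\beta_n)_n$ supplied by Theorem \ref{T:weakunperexample} with $\eps=1$, the Choquet representation of $\theta|_\fB$ from Theorem \ref{T:choquet}, and a reverse Fatou estimate justified by the uniform bound $\|\beta_n\|\leq 2\|a\|$. The only cosmetic difference is that you package the pointwise bounds into the function $G$ and conclude from $\int_{\S_p(\fB)}(\theta(a)-G)\,d\mu=0$ with a strictly positive integrand, whereas the paper chains the inequalities $\theta(a)\leq\liminf_n\theta(\beta_n)\leq\limsup_n\theta(\beta_n)<\theta(a)$ directly.
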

\begin{proof}
Fix a self-adjoint element $a\in \fA$, which we may assume is non-zero without loss of generality. The desired conclusion is unchanged if we replace $a$ by $-a$, so we may assume that $\theta(a)\geq 0$. We argue by contradiction. Assume on the contrary that for each pure state $\omega$ on $\fB$ we have
\[
\max_{\psi\in \E(\omega,\fA)}|\psi(a)|<\theta(a).
\]
Then, we infer from Lemma \ref{L:infext} that 
\[
\inf\{ \omega(b):b\in \fB, b\geq a\}<\theta(a).
\]
Now, by Theorem \ref{T:weakunperexample} there is a sequence $(\beta_n)_{n\in \bN}$ of self-adjoint elements in $\fB$ with $\|\beta_n\|\leq 2\|a\|$ for every $n\in \bN$ and such that 
\[
\sup\{\theta(c): c\in \fB,  c\leq a\}\leq \liminf_{n\to \infty}\theta(\beta_n)
\]
and
\[
\limsup_{n\to \infty}\omega(\beta_n)\leq \inf\{\omega(b):b\in \fB, b\geq a\}<\theta(a)
\]
for every pure state $\omega$ on $\fB$. Since $\theta$ is assumed to have the unique extension property with respect to $\fB$, by Lemma \ref{L:infext} we find
\[
\theta(a)\leq \liminf_{n\to \infty}\theta(\beta_n).
\]
On the other hand, since $\fB$ is assumed to be separable we may invoke Theorem \ref{T:choquet} to find a Borel probability measure $\mu$ concentrated on $\S_p(\fB)$ with the property that 
\[
\theta(b)=\int_{\S_p(\fB)} \omega(b) d\mu(\omega), \quad b\in \fB.
\]
Upon applying Fatou's lemma to the sequence of non-negative continuous functions 
\[
\omega\mapsto 2\|a\|-\omega(\beta_n), \quad \omega\in \S_p(\fB)
\]
a simple calculation yields
\[
\limsup_{n\to \infty}\left(\int_{\S_p(\fB)}\omega(\beta_n) d\mu(\omega)\right)\leq \int_{\S_p(\fB)}\left(\limsup_{n\to \infty}\omega(\beta_n) \right)d\mu(\omega).
\]
Consequently
\begin{align*}
\limsup_{n\to \infty}\theta(\beta_n)&=\limsup_{n\to \infty}\left(\int_{\S_p(\fB)}\omega(\beta_n) d\mu(\omega)\right)\\
&\leq \int_{\S_p(\fB)}\left(\limsup_{n\to \infty}\omega(\beta_n) \right)d\mu(\omega)\\
&<\theta(a).
\end{align*}
But this implies that
\[
\theta(a)\leq  \liminf_{n\to \infty}\theta(\beta_n)\leq \limsup_{n\to \infty}\theta(\beta_n)<\theta(a)
\]
which is absurd.
\end{proof}

We mention a noteworthy consequence of Theorem \ref{T:weppnorm} which is related to hyperrigidity.


\begin{corollary}\label{C:wephyper}
Let $\fS$ be a separable operator system and let $\fA=\rC^*(\fS)$. Assume that every irreducible $*$-representation of $\fA$ is a boundary representation for $\fS$. Let $\pi:\fA\to B(\H)$ be a unital $*$-representa\-tion such that $\pi(\fA)$ has the weak expectation property, and let $\Pi:\fA\to B(\H)$ be a unital completely positive extension of $\pi|_\fS$. Then, $\pi=\Pi$ if and only if there is a family of states on $\rC^*(\Pi(\fA))$ which separate $(\Pi-\pi)(\fA)$ and have the unique extension property with respect to $\pi(\fA)$. 
\end{corollary}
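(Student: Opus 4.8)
The plan is to treat the forward implication as immediate and to reduce the substantive converse to a combination of Theorem \ref{T:weppnorm} with Theorem \ref{T:purerestriction}.

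First, the direction $\pi=\Pi\Rightarrow$ (family condition) is trivial: if $\pi=\Pi$ then $\rC^*(\Pi(\fA))=\pi(\fA)$, so $(\Pi-\pi)(\fA)=\{0\}$ contains no non-zero self-adjoint element and the separation requirement is vacuous, while every state on $\rC^*(\Pi(\fA))=\pi(\fA)$ automatically has the unique extension property with respect to $\pi(\fA)$. Hence the family of all states already witnesses the right-hand side.

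For the converse, I would start from a family $\F$ of states on $\rC^*(\Pi(\fA))$ that separates $(\Pi-\pi)(\fA)$ and has the unique extension property with respect to $\pi(\fA)$, and aim to produce from it a family of states that restrict to be \emph{pure} on $\pi(\fA)$ while still separating $(\Pi-\pi)(\fA)$; Theorem \ref{T:purerestriction} then yields $\pi=\Pi$ at once. To set up Theorem \ref{T:weppnorm}, I would take the subalgebra to be $\fB=\pi(\fA)$ and the ambient algebra to be $\rC^*(\Pi(\fA))$, noting that $\fB\subset \rC^*(\Pi(\fA))$ by (\ref{Eq:inclusion}); that $\fB$ is separable because $\fA=\rC^*(\fS)$ is (being generated by the separable set $\fS$) and $\pi$ is continuous; and that $\fB$ has the weak expectation property by hypothesis. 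I would also record that $(\Pi-\pi)(\fA)$ is a self-adjoint subspace, since both $\pi$ and $\Pi$ preserve adjoints.

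The key step is then the following upgrade. Fix a non-zero self-adjoint $s\in (\Pi-\pi)(\fA)$. Since $\F$ separates $(\Pi-\pi)(\fA)$, there is some $\theta\in \F$ with $|\theta(s)|>0$, and this $\theta$ has the unique extension property with respect to $\pi(\fA)$. Feeding $\theta$ and the self-adjoint element $s$ into Theorem \ref{T:weppnorm} produces a state $\psi_s$ on $\rC^*(\Pi(\fA))$ which restricts to be pure on $\pi(\fA)$ and satisfies $|\psi_s(s)|\geq |\theta(s)|>0$. Ranging over all non-zero self-adjoint $s\in (\Pi-\pi)(\fA)$, the collection $\{\psi_s\}$ is a family of states on $\rC^*(\Pi(\fA))$ that separates $(\Pi-\pi)(\fA)$ and restricts to pure states on $\pi(\fA)$, and the implication (iii)$\Rightarrow$(i) of Theorem \ref{T:purerestriction} delivers $\pi=\Pi$.

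I do not anticipate a genuine obstacle in the corollary itself: all the real difficulty has already been absorbed into Theorem \ref{T:weppnorm}, where the weak expectation property is used (via the tight Riesz interpolation property) to pass from a state with the unique extension property to one whose restriction is pure while retaining control of its value on a prescribed self-adjoint element. The only things demanding care are bookkeeping: confirming that the separability and containment hypotheses of Theorem \ref{T:weppnorm} line up under the assignment $\fB=\pi(\fA)$, ambient algebra $\rC^*(\Pi(\fA))$, and that self-adjointness of $(\Pi-\pi)(\fA)$ makes the separation condition a statement about exactly the self-adjoint elements to which Theorem \ref{T:weppnorm} applies.
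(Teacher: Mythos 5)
Your proposal is correct and takes essentially the same route as the paper: the paper's proof likewise applies Theorem \ref{T:weppnorm} to the inclusion $\pi(\fA)\subset \rC^*(\Pi(\fA))$ to convert the given separating family with the unique extension property into a (potentially different) family of states that separate $(\Pi-\pi)(\fA)$ and restrict to be pure on $\pi(\fA)$, and then concludes via Theorem \ref{T:purerestriction}. Your element-by-element construction of the new family $\{\psi_s\}$, together with the separability and containment bookkeeping, simply makes explicit what the paper's terser proof leaves implicit.
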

\begin{proof}
Assume that there is a family of states on $\rC^*(\Pi(\fA))$ which separate $(\Pi-\pi)(\fA)$ and have the unique extension property with respect to $\pi(\fA)$. We may apply Theorem \ref{T:weppnorm} to the inclusion $\pi(\fA)\subset \rC^*(\Pi(\fA))$ (see (\ref{Eq:inclusion})) to find a (potentially different) family of states on $\rC^*(\Pi(\fA))$ which separate $(\Pi-\pi)(\fA)$ and restrict to be pure on $\pi(\fA)$. Consequently, $\pi=\Pi$ by virtue of Theorem \ref{T:purerestriction}. The converse is trivial.
\end{proof}

We draw the reader's attention to the main point of Corollary \ref{C:wephyper}: unlike in Theorem \ref{T:purestateuep}, the separating family is not assumed to consist of \emph{pure} states.

We finish by mentioning that it would be of interest to obtain a version of Theorem \ref{T:unpstate} or Corollary \ref{C:unphyper} based on Theorem \ref{T:weakunperexample}. It is not clear to us how this can be achieved at present. The promise of such an application of Theorem \ref{T:weakunperexample} is the reason why we chose to state it in the context of $\fB$ being separable. The reader will notice that this condition can be removed at the cost of obtaining a net rather than a sequence. We opted for the current version as sequences seem more appropriate for arguments relying on integration techniques.

\bibliography{/Users/raphaelclouatre/Dropbox/Research/Bibliography/biblio_main}
\bibliographystyle{plain}

\end{document}